\makeindex \setlength{\textheight}{8.7in}
\newtheorem{thm}{\bf Theorem}[section]
\newtheorem{lem}[thm]{Lemma}
\newtheorem{rem}[thm]{Remark}
\numberwithin{equation}{section}
\newenvironment{preuve}
{\medskip\noindent{\bf Proof.\\ }}{\\ \null \hfill
$\blacksquare$\par\medskip}
\newcommand{\zero}{\textup{\tiny 0}}
\newcommand{\one}{\textup{\tiny 1}}
\newcommand{\two}{\textup{\tiny 2}}
\newcommand{\mtwo}{\textup{\tiny -2}}
\newcommand{\three}{\textup{\tiny 3}}
\newcommand{\four}{\textup{\tiny 4}}
\newcommand{\five}{\textup{\tiny 5}}
\newcommand{\six}{\textup{\tiny 6}}
\newcommand{\seven}{\textup{\tiny 7}}
\newcommand{\eight}{\textup{\tiny 8}}
\newcommand{\nine}{\textup{\tiny 9}}
\newcommand{\ten}{\textup{\tiny 10}}
\newcommand{\eleven}{\textup{\tiny 11}}
\newcommand{\twelve}{\textup{\tiny 12}}
\newcommand{\Omegapt}{\textup{\tiny $ \Omega$}}
\newcommand{\tr}{\textup{\tiny T}}
\newcommand{\dv}{\textup{div\,}}
\newcommand{\uu}{\mathbf{u}}
\newcommand{\UU}{\mathbf{U}}
\newcommand{\lbf}{\mathbf{{L}}}
\newcommand{\lrm}{\mathrm{{L}}}
\newcommand{\U}{\mathrm{{U}}}
\newcommand{\ff}{\mathbf{f}}
\newcommand{\SSS}{\mathbf{S}}
\newcommand{\ww}{\mathbf{w}}
\newcommand{\vv}{\mathbf{v}}
\newcommand{\nn}{\mathbf{n}}
\newcommand{\FF}{\mathbf{F}}
\newcommand{\HH}{\mathbf{H}}
\newcommand{\HHH}{\mathrm{H}}
\newcommand{\C}{\mathrm{C}}
\newcommand{\Q}{\mathrm{Q}}
\newcommand{\T}{\mathrm{T}}
\newcommand{\we}{\textup{We}}
\newcommand{\ep}{\varepsilon}
\newcommand{\D}{\mathbf{D}}
\newcommand{\DD}{\mathbf{D}}
\newcommand{\dte}[1]{\displaystyle{\frac{\partial#1}{\partial t^*}}}
\newcommand{\dt}[1]{\displaystyle{\frac{\partial#1}{\partial t}}}
\newcommand{\Dt}[1]{\displaystyle{\frac{\mathrm{d}#1}{\mathrm{dt}}}}
\newcommand{\Dae}[1]{\displaystyle{\frac{\mathcal{D}_a#1}{\mathcal{D}t^*}}}
\newcommand{\abs}[1]{\left\vert#1\right\vert}
\newcommand{\n}[1]{\left|\left|#1\right|\right|}
\newcommand{\lebl}[1]{\mathrm{L}^{#1}{(0,\mathrm{T},\mathrm{L}^{\tiny\textup{2}}(\Omega))}}
\newcommand{\leblbf}[1]{\mathrm{L}^{#1}{(0,\mathrm{T},\mathbf{L}^{\tiny\textup{2}}(\Omega))}}
\newcommand{\lebhzero}[1]{\mathrm{L}^{#1}{(0,\mathrm{T},\mathrm{H}_\zero^ {\tiny\textup{1}}(\Omega))}}
\newcommand{\lebhzerobf}[1]{\mathrm{L}^{#1}{(0,\mathrm{T},\mathbf{H}_\zero^ {\tiny\textup{1}}(\Omega))}}
\newcommand{\lebh}[1]{\mathrm{L}^{#1}{(0,\mathrm{T},\mathrm{H}^{\tiny\textup{1}}(\Omega))}}
\newcommand{\lebhbf}[1]{\mathrm{L}^{#1}{(0,\mathrm{T},\mathbf{H}^{\tiny\textup{1}}(\Omega))}}
\newcommand{\lebhh}[1]{\mathrm{L}^{#1}{(0,\mathrm{T},\mathrm{H}^{\tiny\textup{2}}(\Omega))}}
\newcommand{\lebhhbf}[1]{\mathrm{L}^{#1}{(0,\mathrm{T},\mathbf{H}^{\tiny\textup{2}}(\Omega))}}
\newcommand{\lebhhh}[1]{\mathrm{L}^{#1}{(0,\mathrm{T},\mathrm{H}^{\tiny\textup{3}}(\Omega))}}
\newcommand{\lebhhhbf}[1]{\mathrm{L}^{#1}{(0,\mathrm{T},\mathbf{H}^{\tiny\textup{3}}(\Omega))}}
\newcommand{\contllbf}{\mathrm{C}{([0,\mathrm{T}];\mathbf{L}^{\tiny\textup{2}}(\Omega))}}
\newcommand{\conth}{\mathrm{C}{([0,\mathrm{T}];\mathrm{H}^{\tiny\textup{1}}(\Omega))}}
\newcommand{\conthbf}{\mathrm{C}{([0,\mathrm{T}];\mathbf{H}^{\tiny\textup{1}}(\Omega))}}
\newcommand{\conthzerobf}{\mathrm{C}{([0,\mathrm{T}];\mathbf{H}^{\tiny\textup{1}}_\zero(\Omega))}}
\newcommand{\conthhintersection}{\mathrm{C}{([0,\mathrm{T}];\mathbf{H}^{\tiny\textup{2}}(\Omega)\cap\mathbf{H}^{\tiny\textup{1}}_\zero(\Omega))}}
\newcommand{\lebhintersection}[1]{\mathrm{L}^{#1}{(0,\mathrm{T},\mathrm{H}^{\tiny\textup{2}}(\Omega)\cap\mathrm{H}^{\tiny\textup{1}}_\zero(\Omega)) }}
\newcommand{\lebhintersectionbf}[1]{\mathrm{L}^{#1}{(0,\mathrm{T},\mathbf{H}^{\tiny\textup{2}}(\Omega)\cap\mathbf{H}^{\tiny\textup{1}}_\zero(\Omega)) }}
\newcommand{\lebhmoins}[1]{\mathrm{L}^{#1}{(0,\mathrm{T},\mathrm{H}^{\tiny\textup{-1}}(\Omega))}}
\newcommand{\none}[1]{[#1]}
\title{Local existence and uniqueness of solutions for non stationary compressible viscoelastic  fluid  of Oldroyd type}
\begin{document}
\maketitle
\date{}
\begin{center}
\textbf{ Zaynab SALLOUM}\\
 {Universit\'{e} Libanaise, Facult\'{e} des Sciences I,}\\
{D\'{e}partement des Math\'{e}matiques, Hadath, Liban}\\
\verb"salloum@ul.edu.lb"
\end{center}
\begin{abstract}
This work is devoted to the study of a compressible viscoelastic fluids satisfying the Oldroyd-B model in a regular bounded domain. We prove the local existence of solutions and uniqueness of
flows by a classical fixed point argument.
\end{abstract}
\section{Introduction}
In this paper, we study the local  existence of solutions
for   compressible viscoelastic fluid flows in the case of the
Oldroyd-B model in a regular bounded domain in $\mathbb{R}^3.$ We
also show the uniqueness of solutions. We prove l'existence by
using the classical method based on the Schauder fixed-point theorem.  Valli, in \cite{V}, show the local existence in the case of the Navier-Stokes equations. The case of the Oldroyd model for incompressible fluid is studied by Guillopé and Saut in \cite{GS}. Talhouk shows the existence and the  uniqueness  for Jeffreys model's in \cite{Ta2}.

This paper is organized as follows. Section \ref{mod-3} is devoted
to the modeling of the problem and to the definition of
well-prepared initial conditions. The principal notation and results
are detailed in Section \ref{res-3}. The local existence of regular
solutions is given in Section 4.
\section{{The Modeling}}\label{mod-3}
\subsection{Unsteady Flows of Compressible Viscoelastic Fluids}
 Consider
 unsteady flows of viscoelastic fluids in a bounded domain $ \Omega^*$ of $
 \mathbb{R}^\three$ with a regular boundary $\Gamma^*$.
   The system, obtained from the laws of conservation of momentum, and of mass,
and from the constitutive equation of the fluid, reads as follows
\cite{ZS}: in $\Q^*_{\T^*}=(0,T^*)\times \Omega^*$,
 \begin{equation}\label{e1}
\left\{
\begin{array}{r c l l}
\rho^*\left(\dte{\uu^*}+(\uu^*\cdot\nabla^*)\uu^*\right)&=&\rho^*\ff^*+\textbf{\dv}^*(\tau^*-p^*\textbf{I}),\\
\dte{\rho^*}+\dv^*{(\rho^*\uu^*)}&=&0,\\
\tau^*+\lambda \Dae{\tau^*}&=&2\eta \left(\D^*+\mu
\Dae{\D^*}\right).
\end{array}
\right.
\end{equation}
The *-variables  are the dimensional ones in the domain of the flow
$\Omega^*$, and $T^*>0$ is a dimensional time. The unknowns are the
velocities $\uu^*$, the density $\rho^*$,  and
the symmetric tensor of constraints $\tau^*$. $ \eta$ is the total
viscosity of the fluid, $\lambda > 0$ is the relaxation time, and
$\mu$ is the retardation time ($0<\mu<\lambda$).

 $\Dae{\tau^*}$ is an
objective derivative of the tensor $\tau^*$, given by
\begin{equation*}
\Dae{\tau^*}=\left(\frac{\partial}{\partial
t^*}+(\uu^*\cdot\nabla^*)\right)\tau^*+\tau^*\mathbf W^*-\mathbf
W^*\tau^*-a(\D^*\tau^*+\tau^*\D^*),
\end{equation*}
where $\mathbf W^*=\mathbf W^*[\uu^*]=\displaystyle{\frac{1}{2}
(\nabla^* \uu^*-{\nabla^*}^\tr \uu^* )}$ and
$\D^*=\D^*[\uu^*]=\displaystyle{\frac{1}{2}
(\nabla^*\uu^*+{\nabla^*}^\tr \uu^*)}$  are, respectively, the  rate
of rotation and the rate of deformation tensors. $a$ is a real
parameter in $ [-1,1]$.

System (\ref{e1}) is completed by a condition on the boundary,
\begin{equation*}\label{cd}
\uu^*=0 \text{ on } \Sigma^*_{\T^*}=(0,\T^*) \times \Gamma^*,
\end{equation*}
and by the initial data
\begin{equation*}\label{ci}
\uu^*(0,\cdot)=\uu^*_0,\quad \rho^*(0,\cdot)=\rho^*_0,\quad
\tau^*(0,\cdot)=\tau^*_0, \quad \text{in }\Omega^*.
\end{equation*}

We split   $\tau^*$ into two parts: the Newtonian one $\tau^*_s$
related to the solvent, and the polymeric one $\tau^*_p$. We may
write
\begin{equation*}
\tau^*=\tau^*_s+\tau^*_p=2\eta_s\D^*+ \tau^*_e,
\end{equation*}
where
$\tau_e^*=\tau_p^*-\left(\frac{2\xi_s}{3}\dv^*{\uu^*}\right)\textbf{I}$,
and $\textbf{I}$ is the identity tensor. $\eta_s=\eta \mu/\lambda$
and $\xi_s$ are the solvent viscosity and the group viscosity,
respectively. Since we are interested in a model for weakly
compressible fluids, we suppose that $\xi_s=0$.
From the third equation in  $(\ref{e1})$, we can deduce that
$\tau_e^*$ satisfies the equation
\begin{equation*}
\tau^*_e+\lambda \Dae{\tau^*_e}=2\eta_e \D^*,
\end{equation*}
%
%
where $\eta_e=\eta-\eta_s$ is called the polymer viscosity. $\eta_s$
and $\eta_e$ are two non-negative numbers.

Therefore, under the assumption $\xi_s=0$, System (\ref{e1}) is
equivalent to  the system in $\Q^*_{\T^*}$,
\begin{equation}\label{e3}
\left\{
\begin{array}{r c l l}
\rho^*\left(\dte{\uu^*}+(\uu^*\cdot\nabla^*)\uu^*\right)&=&\rho^*\ff^*+\eta_s(\Delta ^*\uu^*+\nabla ^*\dv^*{\uu^*})-\nabla^* p^*+\textbf{\dv}^* \tau^*,\\
\dte{\rho^*}+\dv^*{(\rho^*\uu^*)}&=&0,\\
\tau^*+\lambda \Dae{\tau^*}&=&2\eta_e \D^*[\uu^*],
\end{array}
\right.
\end{equation}
where  we have denoted  $\tau_e^*$ by $\tau^*$ to simplify the
notation.
\subsection{{Well-Prepared Initial Conditions}}
We first define the Mach number $\ep$ as being the ratio of the
typical velocity of the fluid $\U_\zero$ to the speed of sound
$\left(\frac{d p^*}{d \rho^*}(\overline{\rho}^*_\zero)\right)^{1/2}$
in the same fluid at the same state. We divide the density
${\rho}^*={\rho}^{*\ep}$ into two parts: a {\sl constant} one
$\overline{\rho}^*_0$, independent of $\ep$, and a remainder, which
is small for small $\ep's$, say
\begin{equation*}
\rho^{*\ep}=\overline{\rho}^*_\zero+\mathcal{O}(\ep^2)=\overline{\rho}^*_\zero+\ep^2\sigma^{*\ep}.
\end{equation*}
 We also suppose that the initial conditions $\rho^{*\ep}_\zero$,
$\uu_\zero^{*\ep}$ and $\tau^{*\ep}_\zero$ are
\textit{well-prepared}, which means that they take a similar form,
say
\begin{eqnarray*}
\rho^{*\ep}_\zero&=&\overline{\rho}^*_\zero+\mathcal{O}(\ep^2)=\overline{\rho}^*_\zero+\ep^2\sigma^{*\ep}_\zero,\\
\uu^{*\ep}_\zero&=&\vv^{*}_\zero+\vv_\zero^{*\ep}, \textup{ with }
\dv
{\vv^*_\zero}=0,\\
\tau^{*\ep}_\zero&=&\SSS^*_\zero+\SSS^{*\ep}_\zero,
\end{eqnarray*}
where   $\vv^{*}_\zero$ and $\SSS^*_\zero$ are, respectively,
 a vector and a symmetric tensor, both  independent of $\ep$.

  We assume
\begin{equation*}
\mathfrak{m}^*=\min_{\overline{\Omega}^*}{\rho}^*_\zero>0 \quad
\textup{ and } \quad
\mathfrak{M}^*=\max_{\overline{\Omega}^*}{\rho}^*_\zero.
\end{equation*}
 Assuming that $p^*=p^*(\rho^*)$ is regular, say class $\C^\three$ at least, we remark
\begin{equation*}
\frac{d p^*}{d\rho^*}(\overline{\rho}^*_\zero+\ep^2\sigma^*)-\frac{d
p^*}{d\rho^*}(\overline{\rho}^*_\zero)=\ep^2\int_0^1\frac{d^2 p^*}{d
{\rho}^{*2}}(\overline{\rho}^*_\zero+s\ep^2\sigma^*)\,ds.
\end{equation*}
We introduce the function $w^*$, defined by $w^*(\sigma^*)=\frac{d
p^*}{d\rho^*}(\overline{\rho}^*_\zero+\ep^2\sigma^*)-\frac{d
p^*}{d\rho^*}(\overline{\rho}^*_\zero)$, and remark that $w^*$
depends on $\ep$, satisfies $w^*(0)=0$, and is of class $\C^\two$ at
least.

 Replacing $\rho^*$ by its value in
the first equation $(\ref{e3})$, one infers
\begin{eqnarray*}
(\overline{\rho}^*_\zero+\ep^2\sigma^*)\left(\dte{\uu^*}+(\uu^*\cdot\nabla^*)\uu^*\right)+\ep^2\frac{d
p^*}{d\rho^*}(\overline{\rho}^*_\zero+\ep^2\sigma^*)\nabla^*\sigma^*\hskip5cm\\
=(\overline{\rho}^*_\zero+\ep^2\sigma^*)\ff^*+\eta_s(\Delta^*
\uu^*+\nabla^* \dv^*{\uu^*}) +\textbf{\dv}^*\tau^*.\hskip3cm
\end{eqnarray*}
We can also rewrite this equality, by taking into account the
definitions of
 $w^*(\sigma^*)$ and of the Mach number $\ep$, in the form
\begin{eqnarray*}\label{eqt1}
(\overline{\rho}^*_\zero+\ep^\two\sigma^*)\left(\dte{\uu^*}+(\uu^*\cdot\nabla^*)\uu^*\right)+(\UU_\zero)^\two\nabla^*\sigma^*\hskip6cm\nonumber\\
=(\overline{\rho}^*_\zero+\ep^\two\sigma^*)\ff^*+\eta_s(\Delta^*
\uu^*+\nabla^* \dv^*{\uu^*}) +\textbf{\dv}^* \tau^*-\ep^\two
w^*(\sigma^*)\nabla^*\sigma^*.
\end{eqnarray*}
%
%
From the second equation in $(\ref{e3})$ we easily deduce
\begin{equation*}
\ep^\two\dte{\sigma^*}+\overline{\rho}^*_\zero
\dv^*{\uu^*}+\ep^\two\dv^*{(\sigma^*\uu^*)}=0.
\end{equation*}
Finally, System (\ref{e3}) can be written as follows, in
$\Q^*_{\T^*}$,
\begin{equation}\label{e4}
\left\{
\begin{array}{r c l l}
(\overline{\rho}^*_\zero+\ep^\two\sigma^*)\left(\dte{\uu^*}+(\uu^*\cdot\nabla^*)\uu^*\right)+(\UU_\zero)^\two\nabla^*\sigma^*
=  (\overline{\rho}^*_\zero+\ep^\two\sigma^*)\ff^*+\,\textbf{\dv}^*
\tau^*
\\
+\eta_s(\Delta^*
\uu^*+\nabla^* \dv^*{\uu^*})-\ep^\two w^*(\sigma^*)\nabla^*\sigma^*,\\
\ep^\two \dte{\sigma^*}+\overline{\rho}^*_\zero\dv^*{\uu^*}+\ep^\two \dv^*{(\sigma^*\uu^*)} = 0,\hskip3cm\\
\tau^*+\lambda \Dae{\tau^*} = 2\eta_e \D^*[\uu^*].\hskip4cm
\end{array}
\right.
\end{equation}
\subsection{{Dimensionless Variables}} We introduce the dimensionless
variables,
\begin{equation*}
\textbf{x}^*=L_\zero \textbf{x},\quad t^*=\frac{L_\zero}{\U_\zero}
t,\quad \rho^*=a_\zero{\rho},\quad w^*(\sigma^*)=(\U_\zero)^\two
w(\sigma),
\end{equation*}
\begin{equation*}
\uu^* = {\U}_\zero\uu,\quad \sigma^*=a_\zero\sigma ,\quad \tau^*=
T_\zero\tau,\quad p^*(\rho^*)=T_\zero p(\rho) , \quad
\ff^*=\frac{(\U_\zero)^2}{L_\zero} \ff,
\end{equation*}
where $L_\zero$ represents a typical length of the flow. The real
numbers $a_\zero=\frac{\eta}{ \U_\zero L_\zero}$ and
$T_\zero=\frac{\eta \U_\zero}{L_\zero}$ characterize the density and
the stress tensor of the fluid. $\Omega$ denotes the non-dimensional
domain of the flow,  with boundary $\Gamma$,  and $T>0$ a
non-dimensional time.

We introduce three non-dimensional  numbers: a number $\alpha$
similar to the Reynolds number for incompressible flows, the
Weissenberg number $\we$, and a number $\omega$ relative to the
viscosities of the fluid,
\begin{equation*}
\alpha=
\frac{\overline{\rho}^*_\zero}{a_\zero}=\frac{\overline{\rho}^*_\zero
\U_\zero L_\zero}{\eta},\quad \we=\frac{\lambda \U_\zero}{L_\zero},
\quad \omega=1-\frac{\eta_s}{\eta}.
\end{equation*}
We also define
\begin{equation*}\label{defomega} w(\sigma)=\alpha\left\{\frac{dp}{d\rho}\left(\alpha+\ep^\two
\sigma\right)-\frac{dp}{d\rho}\left(\alpha\right)\right\}.\end{equation*}

In dimensionless variables, System (\ref{e4}) takes the form, in
$\Q_\T=(0,T)\times\Omega$,
\begin{equation}\label{e6}
\left\{
\begin{array}{r c l l}
 \uu'+(\uu\cdot\nabla)\uu +\displaystyle\frac{1}{\alpha+\ep^\two\sigma}\nabla\sigma&=&
\ff+\displaystyle\frac{1-\omega}{\alpha+\ep^\two\sigma}(\Delta
\uu+\nabla \dv{\uu})
 +\displaystyle\frac{\textbf{\dv}
\tau}{\alpha+\ep^\two\sigma} \\\\&&-\displaystyle\frac{\ep^\two
w(\sigma)\nabla\sigma}{\alpha+\ep^\two\sigma},\\ \\
\sigma'+\ep^{-2}\alpha\,\dv{\uu}+\dv{(\sigma\uu)}&=&0,\\ \\
\tau+\we\{\tau'+(\uu\cdot\nabla)\tau+\mathbf
g(\nabla\uu,\tau)\}&=&2\omega\D[\uu],
\end{array}
\right.
\end{equation}
with the notation  $ \uu'=\dt{\uu}$, $ \sigma'=\dt{\sigma}$ and
$\tau'=\dt{\tau} $, and
$$
\mathbf g(\nabla \uu,\tau)=\tau \mathbf W[\uu]-\mathbf
W[\uu]\tau-a\Big(\D[\uu]\tau+\tau\D[\uu]\Big).
$$
 Introducing the differential
operator $A=-(\Delta +\nabla \dv)$ we may rewrite System (\ref{e6})
as follows, in $\Q_\T$,
\begin{equation}\label{e7}
\left\{
\begin{array}{r c l l}
\alpha\Bigl({\uu'}+(\uu\cdot\nabla)\uu\Bigl)+(1-\omega)A\uu+\nabla\sigma&=&
\FF(\uu,\sigma,\tau)+\textbf{\dv} \tau,\\ \\
{\sigma'}+(\uu\cdot\nabla)\sigma+\sigma\dv\uu&=&-\ep^{\two}\alpha\,\dv{\uu},\\
\\ \tau+\we\Bigl(\tau'+(\uu\cdot\nabla)\tau+\mathbf g(\nabla\uu,\tau)\Bigl)
&=&2\omega\D[\uu],
\end{array}
\right.
\end{equation}
with
\begin{eqnarray}\label{defF}
\FF(\uu,\sigma,\tau)&=&\alpha\ff+\frac{(1-\omega)\ep^\two\sigma}{\alpha+\ep^\two\sigma}A\uu+
\frac{\ep^\two(\sigma-w(\sigma))}{\alpha+\ep^\two\sigma}\nabla\sigma
-\frac{\ep^\two\sigma}{\alpha+\ep^\two\sigma}\textbf{\dv} \tau.\qquad
\end{eqnarray}
System (\ref{e7}) is completed  by an homogeneous condition on the
boundary,
\begin{equation}\label{cd1}
\uu=0 \text{ on } \Sigma_\T= (0,\T)\times \Gamma,
\end{equation}
and by three initial conditions,
\begin{equation}\label{ci1}
\uu(0,\cdot)=\uu_\zero,\quad \sigma(0,\cdot)=\sigma_\zero,\quad
\tau(0,\cdot)=\tau_\zero,\quad \text{in }\Omega.
\end{equation}

 We also assume the followings,
\begin{equation*}\label{defMm}
0<\mathfrak{m}_\one=\frac{\mathfrak{m}^*}{a_\zero}\leq
\alpha+\ep^\two\sigma_0 \leq
\mathfrak{M}_\one=\frac{\mathfrak{M}^*}{a_\zero}\,, \quad \textup{
in }{\Omega},
\end{equation*}
where $\mathfrak{m}_\one$ and $\mathfrak{M}_\one$ are some given
constants.
\section{{The Notation and Main Results}}\label{res-3}
\subsection{{Notation}}
 $\Omega$
 is a bounded domain in $\mathbb{R}^\three$, with a regular boundary $\Gamma$, and $\nn$ denotes the unit outward-pointing normal
vector to $\Gamma$. For $\textbf{x}=(x^\one,x^\two,x^\three) \in
\,\mathbb{R}^\three$, we denote by $\abs{\textbf{x}}$ its Euclidean
norm.

We will use the following spaces: the Lebesgue spaces
$\lrm^p(\Omega)$, $1\leq p\leq+\infty$, with norms
$\n{\cdot}_{\lrm^p}$
 (except for the $\lrm^2(\Omega)$-norm, which is denoted by $\n{\cdot}$); the Sobolev space $\HHH^k(\Omega)$, $k \in \mathbb{N}^*$,
with norm $\n{\cdot}_k$ and inner product $((\cdot,\cdot))_{k}$; the
vector spaces  $\lbf^2(\Omega)$ and $\HH^k(\Omega)$ of vector-valued
or tensor-valued functions with components in $\lrm^2(\Omega)$ and
$\HHH^k(\Omega)$ respectively, their norms being denoted in the same
way as above. We will also use the homogeneous Sobolev space
$\HHH^1_0(\Omega)$ and its dual $\HHH^{-1}(\Omega)$.

If $I$ is an interval of $\mathbb{R}_+$ and  $k \in \mathbb{N}$,
$\C(\overline{I};\HH^k(\Omega))$ is the space of  vector- or
tensor-valued functions which are  continuous on $\overline{I}$ with
values in $\HH^k(\Omega)$. The norm, in this space, is denoted by
$\n{\cdot}_{\mathcal{C},k}$. $\C_b(\overline{I};\HH^k(\Omega))$ is
the space of functions of $\C({I};\HH^k(\Omega))$ which are bounded
on $\overline{I}$.

 The space  $\lrm^p(I;\HH^k(\Omega))$, for $1\leq p\leq+\infty$,
and $k \in \mathbb{N}$, consists of $p$-integrable functions on $ I$
with values in $\HH^k(\Omega)$. For $1\leq p\leq+\infty$, $k\in
\mathbb{N}$ and  $0<\T\leq\infty$, the norm in
$\lrm^p((0,\T),\HH^k(\Omega))$  is denoted by
$\none{\,\cdot\,}_{p,k,\T}$.
$\lrm^2_{\textup{loc}}(\mathbb{R}_+;\HH^k(\Omega))$ is the set of
functions which are in $\lrm^2(I;\HH^k(\Omega))$ for all bounded
interval $I$ in $\mathbb{R}_+$.


%
%
 The letters $C$, $c_i$ or  $c_i^j$, $i,j=1,2,\cdots$, will denote
  constants taking different values, but not depending on $\ep$.
$\C_\Omegapt$ will be a constant, taking different values, and
depending only on $\Omega$. $(2.1)_n$ denotes the $n-$th equation of
System (2.1).

\subsection{The Main Result}
Recall the problem under study:
\begin{equation}\label{e8} \left\{
\begin{array}{r c l l}
\alpha\Bigl({\uu'}+(\uu\cdot\nabla)\uu\Bigl)+(1-\omega)A\uu+\nabla\sigma&=&
\FF(\uu,\sigma,\tau)+\textbf{\dv} \tau,\\
{\sigma'}+(\uu\cdot\nabla)\sigma+\sigma\dv\uu&=&-\ep^{\two}\alpha\,\dv{\uu},\\ \tau+\we\Bigl(\tau'+(\uu\cdot\nabla)\tau+\mathbf g(\nabla\uu,\tau)\Bigl)
&=&2\omega\D[\uu],\quad\text{ in }\Q_\T,\\
  \uu(0,\cdot)&=&\uu_\zero,&\text{in }\Omega,\\
\sigma(0,\cdot)&=&\sigma_\zero,&\text{in }\Omega,\\
\tau(0,\cdot)&=&\tau_\zero,&  \text{in }\Omega,\\
\uu&=&0,&\text{on } \Sigma_\T,\\
\end{array}
\right.
\end{equation}
where $\FF$ is defined by  (\ref{defF}).

\begin{thm}\label{thm2}\textup{(Existence   of a local solution)}
Assume $\Omega\subset \mathbb{R}^\three$ is a domain of class
$\C^\three$. Let $\mathfrak{m}_\one$ and $\mathfrak{M}_\one$ be two
real constants such that $0<\mathfrak{m}_\one
\leq\mathfrak{M}_\one$. Assume
\begin{eqnarray*}
\ff\in \lrm^2_{\textup{loc}}(\mathbb{R}_+;\HH^{1}(\Omega)),\,\,
with \, \ff'\in
\lrm^2_{\textup{loc}}(\mathbb{R}_+;\HH^{-1}(\Omega)),\, \uu_\zero
\in \HH^2(\Omega)\cap\HH^1_0(\Omega),\, \tau_\zero\in
\HH^2(\Omega),\\\\
\sigma_0\in \HHH^2(\Omega),\,\, with \,\,\int_\Omega
\sigma_0({\rm\bf x}) d{\rm\bf x}=0,\,\, {and} \,\,
0<\mathfrak{m}_\one\leq \alpha+\ep^2\sigma_0\leq \mathfrak{M}_\one
\,, \,\, in \,\, {\Omega}.\qquad
\end{eqnarray*}

Then there exists a time $\T_\one>0$ and a  solution
$(\uu,\sigma,\tau)$ of Problem (\ref{e7})-(\ref{ci1}) in
$Q_{\T_\one}=(0,\T_\one)\times\Omega$, satisfying
$$
\begin{array}{ll}
\uu \in \lrm^\two(0,\T_\one;\HH^\three(\Omega))\cap
\C([0,\T_\one];\HH^\two(\Omega)\cap\HH^\one_\zero(\Omega)),\,
\\
\uu' \in
\lrm^\two(0,\T_\one;\HH^\one_\zero(\Omega))\cap\C([0,\T_\one];\lbf^\two(\Omega)),
\\
(\tau,\sigma)\in\C([0,\T_\one];\HH^\two(\Omega)\times\HHH^\two(\Omega)), \quad (\tau',\sigma') \in
\C([0,\T_\one];\HH^\one(\Omega)\times\HHH^\one(\Omega)),
\end{array}
$$
with
\begin{equation*} \displaystyle{\int_\Omega\sigma(\cdot,\rm{x})d\rm{x}=0},   \quad { in }\,\,[0,\T_\one],\,\,
and\quad \frac{\mathfrak{m}_\one}{2}\leq \alpha\,+\ep^\two\sigma\leq
2\mathfrak{M}_\one \,, \quad { in }\,\, \overline{Q}_{\T_\one}.
\end{equation*}
\end{thm}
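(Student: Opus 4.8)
The plan is to solve Problem (\ref{e8}) by a Schauder fixed-point argument that exploits the different nature of the three equations: the velocity equation is parabolic, the operator $A=-(\Delta+\nabla\dv)$ being elliptic and $\uu=0$ on $\Sigma_{\T_\one}$, whereas the equations for $\sigma$ and $\tau$ are of transport type and provide no smoothing. Accordingly I would build the fixed point on the velocity alone, slaving $\sigma$ and $\tau$ to it. Fix $R>0$ and a small $\T_\one>0$ to be chosen, and let $\mathcal B$ be the set of $\uuu$ with $\uuu\in\lrm^\two(0,\T_\one;\HH^\three(\Omega))\cap\C([0,\T_\one];\HH^\two(\Omega)\cap\HH^\one_\zero(\Omega))$ and $\uuu'\in\lrm^\two(0,\T_\one;\HH^\one_\zero(\Omega))$, of norm $\le R$ and with $\uuu(0,\cdot)=\uu_\zero$; then $\mathcal B$ is convex, bounded, and closed in $E=\C([0,\T_\one];\HH^\one(\Omega))$. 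Given $\uuu\in\mathcal B$ I define $\Phi(\uuu)=\uu$ through three linear solvers: (i) solve the transport equation $\sigma'+(\uuu\cdot\nabla)\sigma+\sigma\dv\uuu=-\ep^\two\alpha\dv\uuu$ with $\sigma(0,\cdot)=\sigma_\zero$; (ii) solve $\tau+\we(\tau'+(\uuu\cdot\nabla)\tau+\mathbf g(\nabla\uuu,\tau))=2\omega\D[\uuu]$ with $\tau(0,\cdot)=\tau_\zero$; (iii) with $\sigma,\tau$ now known, solve the linear parabolic problem $\alpha\uu'+(1-\omega)A\uu=\FF(\uuu,\sigma,\tau)+\dv\tau-\alpha(\uuu\cdot\nabla)\uuu-\nabla\sigma$ with $\uu=0$ on $\Sigma_{\T_\one}$ and $\uu(0,\cdot)=\uu_\zero$.

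The two transport solvers are handled by $\HHH^\two$, respectively $\HH^\two$, energy estimates read off along the characteristics of $\uuu$; they give $\sigma\in\C([0,\T_\one];\HHH^\two(\Omega))$ and $\tau\in\C([0,\T_\one];\HH^\two(\Omega))$, with time-derivatives one order lower, and Gronwall constants of the form $\exp(c\int_0^t\|\uuu\|_3)$. Integrating the $\sigma$-equation over $\Omega$ and using $\uuu=0$ on $\Gamma$ shows that $\int_\Omega\sigma\,d{\rm\bf x}=\int_\Omega\sigma_\zero\,d{\rm\bf x}=0$ is preserved, and for $\T_\one$ small $\sigma$ stays close to $\sigma_\zero$, so the bounds $\tfrac{\m_\one}{2}\le\alpha+\ep^\two\sigma\le 2\mathfrak{M}_\one$ hold; these in turn keep the coefficients $1/(\alpha+\ep^\two\sigma)$ of $\FF$ bounded. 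The parabolic solver (iii) is the classical maximal-regularity/energy theory for $A$: it yields $\uu$ in exactly the spaces claimed in the theorem, with norm controlled by the data and by the $\lrm^\two(0,\T_\one;\HH^\one(\Omega))$-norm of the right-hand side, the crucial input being $\dv\tau\in\lrm^\two(0,\T_\one;\HH^\one(\Omega))$ from (ii). Collecting these estimates, each nonlinear or transport contribution to the right-hand side of (iii) carries either a positive power of $\T_\one$ (through an $\int_0^{\T_\one}$) or the small factor $\ep^\two$; hence, fixing $R$ by the size of the data and then taking $\T_\one$ small enough, one obtains $\Phi(\mathcal B)\subset\mathcal B$.

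It then remains to invoke Schauder. Continuity of $\Phi$ on $E$ follows from the linearity of the three solvers together with stability estimates for differences: if $\uuu_n\to\uuu$ in $E$, the corresponding $\sigma_n-\sigma$ and $\tau_n-\tau$ satisfy transport equations with sources controlled by $\uuu_n-\uuu$, and then $\uu_n-\uu$ solves the parabolic equation with a small right-hand side. Compactness of $\Phi(\mathcal B)$ in $E=\C([0,\T_\one];\HH^\one(\Omega))$ comes from the parabolic gain of regularity: the image is uniformly bounded in $\C([0,\T_\one];\HH^\two(\Omega))$ and, since $\uu'\in\lrm^\two(0,\T_\one;\HH^\one(\Omega))$, equicontinuous in $\HH^\one(\Omega)$, so Arzel\`a--Ascoli (with $\HH^\two\hookrightarrow\hookrightarrow\HH^\one$) gives relative compactness. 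Schauder's fixed-point theorem then produces $\uu=\Phi(\uu)$; this $\uu$ and its associated $\sigma,\tau$ solve (\ref{e7})--(\ref{ci1}) with all the stated regularity, the zero mean of $\sigma$ and the density bounds being those propagated in the transport step.

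The main obstacle I anticipate is closing the a priori estimates across the coupling, that is, matching the regularity demanded by the velocity equation with what the non-smoothing transport equations can supply. Because the parabolic step needs $\dv\tau\in\HH^\one$, the stress must be controlled in $\HH^\two$, which in the transport estimate costs $\|\uuu\|_3$; one must check that the resulting Gronwall constants stay finite and that the commutator terms in the $\HH^\two$ estimate for $\tau$ — those coming from $(\uuu\cdot\nabla)\tau$ and from $\mathbf g(\nabla\uuu,\tau)$ — as well as those in the $\HHH^\two$ estimate for $\sigma$, produce only factors that are small for small $\T_\one$. Securing the lower bound $\alpha+\ep^\two\sigma\ge\m_\one/2$ uniformly in time, on which the boundedness of the coefficients of $\FF$ rests, is the other delicate point.
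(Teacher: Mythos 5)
Your proposal is correct in outline and rests on the same three building blocks as the paper (the parabolic theory for $A=-(\Delta+\nabla\dv)$ in Lemmas \ref{lema1}--\ref{lema2} and the $\HHH^\two$/$\HH^\two$ transport estimates of Lemmas \ref{lem3}--\ref{lem4}), but it organizes the Schauder argument genuinely differently. The paper takes the fixed point over the full triple: the invariant set $\mathfrak{R}_\T$ consists of triples $(\ww,\pi,\psi)$ with prescribed bounds $\mathfrak{B}_\one,\mathfrak{B}_\two$ on all three components and their time derivatives, and the map $\mathfrak{K}$ solves three \emph{decoupled} linear problems, the velocity equation being driven by $\FF(\ww,\pi,\psi)-\alpha(\ww\cdot\nabla)\ww-\nabla\pi+\textbf{\dv}\psi$ built entirely from the given triple; compactness of $\mathfrak{R}_\T$ in $\mathfrak{X}_\T$ is quoted from Simon's theorem, and continuity is proved first in the weaker topology $\mathfrak{Y}_\T$ and then upgraded. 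You instead take the fixed point over the velocity alone, slaving $\sigma$ and $\tau$ to $\uuu$ and feeding the freshly computed $\sigma,\tau$ into the right-hand side of the parabolic step. What your route buys is a smaller fixed-point space, a single invariant-ball constant instead of the paper's bookkeeping in (\ref{choix-B1-2})--(\ref{choix-B2-1}), and a cleaner compactness argument (Arzel\`a--Ascoli with $\HH^\two(\Omega)\hookrightarrow\hookrightarrow\HH^\one(\Omega)$ applied to the image). What it costs is that $\Phi$ is a composition, so the continuity estimate must be chained through the transport solvers; the one place where you should be explicit is that the difference $\FF(\uuu_n,\sigma_n,\tau_n)-\FF(\uuu,\sigma,\tau)$ contains $A\uuu_n-A\uuu$ and $\textbf{\dv}\tau_n-\textbf{\dv}\tau$, which are not controlled by convergence in $E=\C([0,\T_\one];\HH^\one(\Omega))$ alone, so you must interpolate against the uniform $\lrm^\two(0,\T_\one;\HH^\three(\Omega))\cap\C([0,\T_\one];\HH^\two(\Omega))$ bounds on $\mathcal B$ (e.g.\ $\n{v}_\two^\two\leq\n{v}_\one\n{v}_\three$) to convert $\C(\HH^\one)$-convergence of $\uuu_n$ into the $\lrm^\two(\HH^\two)$-convergence the estimate needs; the paper's own continuity lemma faces the analogous issue with $\pi_nA\ww_n-\pi A\ww$ and is no more explicit about it. Your treatment of the zero mean of $\sigma$ and of the bounds $\m_\one/2\leq\alpha+\ep^\two\sigma\leq 2\mathfrak{M}_\one$ through the transport step, and of the self-map property by first fixing the radius from the data and then shrinking $\T_\one$, matches the paper's.
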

\begin{thm}\label{thm3}\textup{(Uniqueness of a local solution)}
There exist a unique solution of Problem (\ref{e7})-(\ref{ci1}), given in Theorem \ref{thm2}.
\end{thm}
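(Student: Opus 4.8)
\begin{preuve}
The plan is to argue by a weighted energy estimate on the difference of two solutions, closed through Gronwall's lemma. Suppose $(\uu_1,\sigma_1,\tau_1)$ and $(\uu_2,\sigma_2,\tau_2)$ are two solutions of Problem (\ref{e7})--(\ref{ci1}) in the regularity class of Theorem \ref{thm2}, sharing the same data $\uu_\zero,\sigma_\zero,\tau_\zero$ and the same force $\ff$. Set $\ww=\uu_1-\uu_2$, $s=\sigma_1-\sigma_2$ and $\SSS=\tau_1-\tau_2$. Subtracting the two copies of (\ref{e7}), these differences solve a linear system with zero initial data $\ww(0)=0$, $s(0)=0$, $\SSS(0)=0$, whose right-hand sides are bilinear in the differences and in the (known) solutions: the convective term produces $(\ww\cdot\nabla)\uu_1+(\uu_2\cdot\nabla)\ww$, the stress transport produces $(\ww\cdot\nabla)\tau_1+(\uu_2\cdot\nabla)\SSS$, and $\G(\nabla\uu_1,\tau_1)-\G(\nabla\uu_2,\tau_2)=\G(\nabla\ww,\tau_1)+\G(\nabla\uu_2,\SSS)$. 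The goal is to show that a suitable energy of $(\ww,s,\SSS)$ vanishes identically on $[0,\T_\one]$.

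First I would introduce the weighted energy
\[
E(t)=\frac{\alpha}{2}\,\n{\ww(t)}^2+\frac{1}{2\ep^\two\alpha}\,\n{s(t)}^2+\frac{\we}{4\omega}\,\n{\SSS(t)}^2,
\]
pair the momentum difference equation with $\ww$, the density difference equation with $\tfrac{1}{\ep^\two\alpha}\,s$, and the stress difference equation with $\tfrac{1}{2\omega}\,\SSS$, and add the three identities. The coercivity $(A\ww,\ww)=\n{\nabla\ww}^2+\n{\dv\ww}^2$ supplies the dissipative term $(1-\omega)\n{\nabla\ww}^2$ (recall $0<\omega<1$, since $0<\mu<\lambda$), and the undifferentiated stress term yields $\tfrac{1}{2\omega}\n{\SSS}^2$. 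The two weights are chosen precisely so that the leading-order couplings cancel: the pressure contribution $(\nabla s,\ww)=-(s,\dv\ww)$ is annihilated against the $-\ep^\two\alpha(\dv\ww,s)$ of the density equation once it carries the weight $1/(\ep^\two\alpha)$, and the elastic coupling $(\dv\SSS,\ww)=-(\SSS,\D[\ww])$ is annihilated against $2\omega(\D[\ww],\SSS)$ once the stress equation carries the weight $1/(2\omega)$. After these cancellations one is left with an identity of the form $E'(t)+(1-\omega)\n{\nabla\ww}^2+\tfrac{1}{2\omega}\n{\SSS}^2=\mathcal{R}(t)$, where $\mathcal{R}$ collects purely bilinear remainder terms.

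It then remains to bound $\mathcal{R}(t)$ by $\kappa(t)\,E(t)+\tfrac{1-\omega}{2}\n{\nabla\ww}^2$ with $\kappa\in\lrm^1(0,\T_\one)$, after which Gronwall's lemma and $E(0)=0$ force $E\equiv0$, hence uniqueness. The transport and lower-order terms are harmless: factors such as $\dv\uu_i$ or $\nabla\uu_i$ are estimated in $\lrm^\infty(\Omega)$ using $\uu_i\in\lrm^\two(0,\T_\one;\HH^\three(\Omega))$, so that $\nabla\uu_i\in\lrm^\two(0,\T_\one;\lrm^\infty(\Omega))$ provides the $\lrm^1$-in-time coefficient required by Gronwall; the factors $\tau_i,\sigma_i,\nabla\tau_i,\nabla\sigma_i$ are controlled through $(\tau_i,\sigma_i)\in\C([0,\T_\one];\HH^\two(\Omega)\times\HHH^\two(\Omega))$ and $\HHH^\two(\Omega)\hookrightarrow\lrm^\infty(\Omega)$, while terms like $((\ww\cdot\nabla)\uu_1,\ww)$ are handled by interpolation between $\lrm^3$ and $\lrm^6$ together with Young's inequality, absorbing a fraction of $\n{\nabla\ww}^2$.

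I expect the main obstacle to be the contribution of $\FF(\uu_1,\sigma_1,\tau_1)-\FF(\uu_2,\sigma_2,\tau_2)$, because the term $\tfrac{(1-\omega)\ep^\two\sigma}{\alpha+\ep^\two\sigma}A\uu$ in (\ref{defF}) produces, after differencing, the second-order expression $\tfrac{(1-\omega)\ep^\two\sigma_1}{\alpha+\ep^\two\sigma_1}A\ww$. Paired with $\ww$ this must be integrated by parts, yielding $(1-\omega)\int_\Omega\tfrac{\ep^\two\sigma_1}{\alpha+\ep^\two\sigma_1}\bigl(|\nabla\ww|^2+|\dv\ww|^2\bigr)\,d\mathbf{x}$ plus a commutator carrying $\nabla\sigma_1$. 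Combined with the coercive term, the principal part leaves the strictly positive dissipation $(1-\omega)\int_\Omega\tfrac{\alpha}{\alpha+\ep^\two\sigma_1}|\nabla\ww|^2\,d\mathbf{x}$, bounded below thanks to the uniform estimate $\tfrac{\m_\one}{2}\le\alpha+\ep^\two\sigma\le 2\mathfrak{M}_\one$; the commutator is controlled using $\nabla\sigma_1\in\lrm^6(\Omega)$ and Young's inequality. Finally, the residual coefficient differences $\bigl[\tfrac{\ep^\two\sigma_1}{\alpha+\ep^\two\sigma_1}-\tfrac{\ep^\two\sigma_2}{\alpha+\ep^\two\sigma_2}\bigr]$ are Lipschitz in $s$ because of the same lower bound, so they contribute only terms of the form $\kappa(t)\,E(t)$, which closes the Gronwall argument.
\end{preuve}
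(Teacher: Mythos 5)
Your proposal follows essentially the same route as the paper: form the difference of the two solutions, test the three equations with $\ww$, a weighted multiple of $s$, and $\SSS/(2\omega)$ so that the pressure coupling $(\nabla s,\ww)$ and the elastic coupling $(\textbf{\dv}\,\SSS,\ww)$ cancel against their counterparts, and close with Gronwall's lemma using the $\lrm^1$-in-time integrability of the coefficients supplied by the regularity class of Theorem \ref{thm2}. The only divergences are cosmetic: the paper weights the density equation by $\ep^\two\sigma/\alpha$ rather than your $s/(\ep^\two\alpha)$ (a discrepancy traceable to the paper's own inconsistent exponent of $\ep$ in the mass equation between (\ref{e6}) and (\ref{e7})), and your handling of the second-order term $\tfrac{(1-\omega)\ep^\two\sigma_1}{\alpha+\ep^\two\sigma_1}A\ww$ by integration by parts and absorption into the dissipation, using the bounds $\tfrac{\mathfrak{m}_\one}{2}\le\alpha+\ep^\two\sigma\le 2\mathfrak{M}_\one$, is in fact more explicit than what the paper records in its estimate (\ref{estm13-3}).
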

To show that the local solution found in Theorem \ref{thm2} exists
for all times under certain regularity and smallness conditions on
the data, we also assume that the function $w\in\C^2(\mathbb{R})$
has the following properties: for all $h\in
\mathrm{L}^\two(0,\T;\mathrm{ H}^2(\Omega))$,
\begin{eqnarray}\label{condw}
\n{(w(h))'}\leq C\n{h'}, \qquad \n{w(h)} \leq C
\,\n{h},\nonumber\\
\n{w(h)}_k \leq C\,\n{h}_k,\quad k=1,2,
\end{eqnarray}
for some constant $C $ depending on $\Omega$ and $w$.
\begin{rem}
There are several examples of functions $p=p(\rho)$, for which $w$
satisfies the conditions above. Let us quote the case where the
pressure is given by  the linear state law
 $p(\rho)=\frac{1}{\ep^\two}(\rho-\alpha)$, as well as the case
of isothermal compressible perfect fluids, where $p(\rho)=(C_s)^\two
\rho$, and $C_s$  is the velocity of sound in the fluid.
\end{rem}
\section{{Existence and Uniqueness of Local Solutions}}\label{exloc-3}
We prove Theorem \ref{thm2} by using the classical method based on
the Schauder fixed-point theorem. To do that in our
case, we study three linear problems: the first one has the velocity
$\uu$ as unknown, and the next ones are two transport equations for
the density $\sigma$ and for the stress tensor $\tau$ respectively.
The parameter $\ep$ is fixed in the interval $(0,1]$.\\

Let $\ww$, $\pi$ and $\psi$  a given vector, function and the symmetric tensor of constraints
respectively. Let $\T$ a positive real number, $Q_{\T}=\Omega\times ]0,\T[$ and
$\Sigma_\T=\partial\Omega\times]0,\T[$. Consider the linear problem,
\begin{equation}\label{e9}
\left\{
\begin{array}{r c l l}
\alpha{\uu'}+(1-\omega)A\uu&=&
\mathfrak{F},\\
{\sigma'}+(\ww.\nabla)\sigma+\sigma\dv\ww&=&\mathcal{G},\\
\tau+\we\{\tau'+(\ww.\nabla)\tau+\mathbf g(\nabla\ww,\tau)\}
&=&2\omega\DD[\ww],&  \text{in }\Q_\T,\\
  \uu(0,x)&=&\uu_\zero(x),&\\
\sigma(0,x)&=&\sigma_\zero(x),&\\
\tau(0,x)&=&\tau_\zero(x),&  \text{in }\Omega,\\
\uu&=&0,&\text{ on } \Sigma_\T,
\end{array}
\right.
\end{equation}
with
\begin{eqnarray}
\mathfrak{F}&=&\FF(\ww,\pi,\psi)-\alpha(\ww.\nabla)\ww-\nabla\pi+\textbf{\dv} \psi,\label{defF1}\\
\mathcal{G}&=&-\ep^{-2}\dv\ww.\label{defG1}
\end{eqnarray}
and \begin{equation} \frac{\mathfrak{m}_\one}{2}\leq
\alpha+\ep^2\pi\leq 2\mathfrak{M}_\one\, , \quad \textup{ in }
\overline{Q}_{\T}. \label{condpi}
\end{equation}
\subsection{{Linear problem concerning the velocity} $\uu$}
Consider the linear problem concerning the velocity $\uu$,
\begin{equation}\label{e10}
\left\{\begin{array}{r c l l}
\alpha{\uu'}+(1-\omega)A\uu&=&
\mathfrak{F},&  \text{in }\Q_\T,\\
  \uu(0,x)&=&\uu_\zero(x),& \text{in }\Omega,\\
\uu&=&0,&\text{on } \Sigma_\T,
\end{array}
\right.
\end{equation}
where $A\uu=-(\Delta \uu+\nabla\dv \uu)$, $\mathfrak{F}$ and $\uu_\zero$
are given and $0<\T\leq +\infty$.\\

The first Lemma concerns the existence of a unique solution of
(\ref{e10}). By classical result of Agmon-Douglis-Nirenberg
\cite{ADN}, $A=-\Delta -\nabla \dv$ is a strongly elliptic operator,
and generates an analytic semigroup in $\lbf^\two(\Omega)$ with
domain $D(A)=\HH^\two(\Omega)\cap\HH^\one_\zero(\Omega)$ (we can see
for instance \cite{V}).
\begin{lem} \label{lema1} Let $ \Omega\subset
\mathbb{R}^\three$ of class $\C^\two$, $\mathfrak{F}\in
\lrm^\two(0,\T;\lbf^2(\Omega))$ and $\uu_\zero \in
\HH^\one_\zero(\Omega)$. Then there exists a unique solution of problem (\ref{e10})
\begin{eqnarray*}
\uu &\in& \lrm^\two(0,\T;\HH^\two(\Omega))\cap \C([0,\T];\HH^\one_\zero(\Omega)),\\
\uu'&\in& \lrm^\two(0,\T;\lbf^\two(\Omega)).
\end{eqnarray*}
Moreover, this solution satisfies the estimate
\begin{eqnarray}\label{estma1}
\frac{\alpha}{2}{\n{\uu'}^\two}_{\lebl{\textup{\tiny
2}}}+\frac{(1-\omega)^\two}{2}\n{A\uu}^\two_{\lebl{\two}}+{(1-\omega)}{\n{
D\uu }^\two}_{\lebl{\infty}}\hskip1cm\nonumber
\\+{(1-\omega)}{\n{\dv \uu }^\two}_{\lebl{\infty}}
 \leq 4(1-\omega)\n{D
\uu_\zero}^\two+\n{\mathfrak{F}}^\two_{\lebl{\two}}.\hskip1.5cm
\end{eqnarray}
\end{lem}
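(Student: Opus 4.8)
The plan is to read problem (\ref{e10}) as a linear abstract evolution equation on the Hilbert space $\lbf^\two(\Omega)$ and to combine semigroup theory for existence and uniqueness with an energy argument for the estimate (\ref{estma1}). First I would rewrite $(\ref{e10})_1$ in the abstract form
\[
\uu'+\frac{1-\omega}{\alpha}\,A\uu=\frac{1}{\alpha}\mathfrak{F},\qquad \uu(0)=\uu_\zero .
\]
Since the cited result of \cite{ADN} ensures that $A=-\Delta-\nabla\dv$ is strongly elliptic and generates an analytic semigroup on $\lbf^\two(\Omega)$ with $D(A)=\HH^\two(\Omega)\cap\HH^\one_\zero(\Omega)$, and since $1-\omega>0$, the operator $\frac{1-\omega}{\alpha}A$ is again the generator of an analytic semigroup. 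As $\lbf^\two(\Omega)$ is Hilbert, I would invoke the maximal $\lrm^\two$-regularity theorem for such generators: for $\mathfrak{F}\in\lrm^\two(0,\T;\lbf^\two(\Omega))$ and an initial datum in the trace space $(\lbf^\two(\Omega),D(A))_{1/2,2}=D(A^{1/2})=\HH^\one_\zero(\Omega)$, there is a unique $\uu$ with $\uu\in\lrm^\two(0,\T;D(A))$ and $\uu'\in\lrm^\two(0,\T;\lbf^\two(\Omega))$. Interpolation between these two spaces then yields $\uu\in\C([0,\T];D(A^{1/2}))=\C([0,\T];\HH^\one_\zero(\Omega))$, which is precisely the announced regularity. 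Uniqueness follows at once from linearity: the difference of two solutions solves the homogeneous problem with vanishing data, hence is zero by the variation-of-constants formula.

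For the estimate (\ref{estma1}) I would argue by energy. Taking the $\lbf^\two(\Omega)$ inner product of $(\ref{e10})_1$ with $\uu'$ and using that $A$ is symmetric and that $\uu$, hence $\uu'$, vanishes on $\Gamma$, integration by parts yields
\[
(A\uu,\uu')=\frac{1}{2}\frac{\mathrm{d}}{\mathrm{dt}}\Bigl(\n{\nabla\uu}^\two+\n{\dv\uu}^\two\Bigr),
\]
so that $\alpha\n{\uu'}^\two+\frac{1-\omega}{2}\frac{\mathrm{d}}{\mathrm{dt}}(\n{\nabla\uu}^\two+\n{\dv\uu}^\two)=(\mathfrak{F},\uu')$. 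To bring out the $\n{A\uu}^\two$ term I would also expand the squared $\lbf^\two(\Omega)$-norm of $(\ref{e10})_1$,
\[
\n{\mathfrak{F}}^\two=\alpha^\two\n{\uu'}^\two+(1-\omega)^\two\n{A\uu}^\two+2\alpha(1-\omega)(\uu',A\uu),
\]
and integrate in time, the cross term reducing once more to the boundary-free quantity $\n{\nabla\uu}^\two+\n{\dv\uu}^\two$ taken at the endpoints. Combining the two relations, estimating $(\mathfrak{F},\uu')$ by Young's inequality, integrating over $(0,t)$ and taking the supremum over $t\in[0,\T]$ for the terms measured in $\lrm^\infty(0,\T;\lbf^\two(\Omega))$ produces (\ref{estma1}); here $D\uu$ is understood as the gradient $\nabla\uu$, and the explicit constants $\frac{\alpha}{2}$, $\frac{(1-\omega)^\two}{2}$ and $4(1-\omega)$ arise from the particular weights chosen in Young's inequality (together with the pointwise bound $\n{\dv\uu}\leq\sqrt{3}\,\n{\nabla\uu}$ used to merge the two initial-data terms) so that the $\n{A\uu}^\two$ contribution is absorbed into the left-hand side.

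The main obstacle is rigor rather than strategy: the energy computation is only formal until one knows that $\uu'$ is a legitimate test function against $A\uu$. I expect to justify this directly from the maximal-regularity solution, which satisfies $\uu(t)\in D(A)$ for almost every $t$ and $\uu'\in\lrm^\two(0,\T;\lbf^\two(\Omega))$, so that the pairing $(A\uu,\uu')$ is well defined and the integrations by parts are licit; alternatively, to avoid quoting maximal regularity, I would run the same computation on Galerkin approximations built from the eigenfunctions of $A$ and pass to the limit, using weak lower semicontinuity of the norms to keep the inequality. The only genuinely delicate book-keeping is the calibration of the Young weights needed to reach the stated constants while retaining $\n{A\uu}^\two$ on the left.
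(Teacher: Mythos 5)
Your proposal is correct and follows essentially the same route as the paper: existence and uniqueness via the Agmon--Douglis--Nirenberg ellipticity of $A$ and the analytic semigroup it generates on $\lbf^\two(\Omega)$ with domain $\HH^\two(\Omega)\cap\HH^\one_\zero(\Omega)$, and the estimate (\ref{estma1}) by testing the equation against a linear combination of $\uu'$ and $A\uu$, integrating the cross term by parts into $\frac{1}{2}\Dt{}\bigl(\n{\nabla\uu}^\two+\n{\dv\uu}^\two\bigr)$, applying Young's inequality, and using the pointwise bound of $\n{\dv\uu_\zero}$ by $\n{D\uu_\zero}$. Your separate expansion of $\n{\mathfrak{F}}^\two$ versus the paper's single multiplication by $\uu'+(1-\omega)A\uu$ is only a cosmetic difference, and your added care about justifying the energy computation (maximal regularity or Galerkin) is a welcome tightening of what the paper leaves implicit.
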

\begin{preuve}
By classical result of Agmon-Douglis-Nirenberg
\cite{ADN}, $A=-\Delta -\nabla \dv$ is a strongly elliptic operator,
and generates an analytic semigroup in $\lbf^\two(\Omega)$ with
domain $D(A)=\HH^\two(\Omega)\cap\HH^\one_\zero(\Omega)$ (we can see
for instance \cite{V}).

We start by showing the estimate (\ref{estma1}). Multiply $(\ref{e10})_\one$
in $\lbf^\two(\Omega)$ by
$\uu'+{\alpha(1-\omega)}A\uu$, then
\begin{eqnarray*}
\int_{\Omega} \abs{\uu'}^\two+2{(1-\omega)}\int_{\Omega}\uu'\cdot
A\uu+{(1-\omega)^\two}\int_{\Omega} \abs{A\uu}^\two \hskip3cm\\=
\int_{\Omega} \mathfrak{F}\cdot
\uu'+{(1-\omega)}\int_{\Omega}\mathfrak{F}\cdot A\uu.
\end{eqnarray*}
Integrate by parts the second term, we obtain
\begin{eqnarray*}
\n{\uu'}^\two+{(1-\omega)}\Dt{}\Big(\n{
D\uu}^\two+\n{\dv{\uu}}^\two\Big)+{(1-\omega)^\two}
\n{A\uu}^\two\leq
\n{\mathfrak{F}}\cdot\n{\uu'}\hskip3cm\\
+\frac{(1-\omega)}{4}\n{\mathfrak{F}}\cdot\n{A\uu}.
\end{eqnarray*}
On the other hand, using Young's inequality on the two terms
right, we get
\begin{eqnarray*}
\n{\mathfrak{F}}\cdot\n{\uu'}&\leq&
\frac{1}{2}\n{\mathfrak{F}}^\two+\frac{1}{2}\n{\uu'}^\two,\\
{(1-\omega)}\n{\mathfrak{F}}\cdot\n{A\uu}&\leq&
\frac{1}{2}\n{\mathfrak{F}}^\two+\frac{(1-\omega)^\two}{2}\n{A\uu}^\two.\\
\end{eqnarray*}
Integrate over  $[0,\T]$ and use the inequality
\begin{equation*}
\n{\dv \uu_\zero}\leq 3\n{D \uu_\zero},
\end{equation*}
then we get (\ref{estma1}).
\end{preuve}
The second Lemma give some stronger estimates.
\begin{lem}[\cite{V, Ta2}] \label{lema2} Under the conditions of Lemma \ref{lema1} and if $\partial \Omega
 \in \C^\three$, $\mathfrak{F}\in
\lrm^\two(0,\T;\HH^\one(\Omega))$, $\mathfrak{F}'\in
\lrm^\two(0,\T;\HH^{-1}(\Omega))$ and $\uu_\zero \in
\HH^\one(\Omega)\cap\HH^\one_\zero(\Omega)$. Then the solution $\uu$
of problem (\ref{e10}) given by Lemma \ref{lema1} is such that
\begin{eqnarray*}
\uu &\in& \lrm^\two(0,\T;\HH^\three(\Omega))\cap
\C([0,\T];\HH^\two(\Omega)\cap\HH^\one_\zero(\Omega)),\\
 \uu' & \in&
\lrm^\two(0,\T;\HH^\one_\zero(\Omega))\cap
\C([0,\T];\lbf^\two(\Omega)).
\end{eqnarray*}
and there exists a constant $C_\one$, depend only in $\Omega$, $\T$,
$\alpha$ and $\omega$, such that one has the estimate
\begin{eqnarray}\label{estma2}
\n{\uu}^\two_{\lebhhh{\two}}+\n{\uu}^\two_{\lebhintersection{\infty}}+
\n{\uu'}^\two_{\lebhzero{\two}}+\n{\uu'}^\two_{\lebl{\infty}}\hskip1cm\nonumber\\
\nonumber\\\leq C_\one\{\n{A\uu_\zero}^\two +\n{\mathfrak{F}(0)}^\two+
\n{\mathfrak{F}}^\two_{\lebh{\two}}+
\n{ \mathfrak{F}'}^\two_{\lebhmoins{\two}}\}.\nonumber\\
\end{eqnarray}
\end{lem}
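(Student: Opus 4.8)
The plan is to bootstrap the conclusions of Lemma~\ref{lema1} by combining the linearity of (\ref{e10}) in the time variable with the elliptic regularity of the operator $A=-\Delta-\nabla\dv$. Since $A$ is strongly elliptic in the Agmon--Douglis--Nirenberg sense and $\partial\Omega\in\C^\three$, at each frozen time one has for $k=0,1$ the a priori elliptic estimate $\n{\uu}_{k+2}\leq\C_\Omegapt\big(\n{A\uu}_k+\n{\uu}_\one\big)$; this is the device that converts time regularity of $\uu'$ into spatial regularity of $\uu$. All the identities written below are first established on Galerkin (or semigroup) approximations, for which every manipulation is licit, and then passed to the limit; I display only the formal a priori bounds.

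\textbf{Step 1 (regularity of $\uu'$).} Differentiating $(\ref{e10})_\one$ in time and setting $\vv=\uu'$ gives $\alpha\vv'+(1-\omega)A\vv=\mathfrak{F}'$, together with $\vv=0$ on $\Sigma_\T$, since $\uu(t)=0$ on $\Gamma$ for every $t$ forces $\uu'(t)\in\HH^\one_\zero(\Omega)$. Testing this equation with $\vv$ in $\lbf^\two(\Omega)$ and integrating the elliptic term by parts yields
\begin{equation*}
\frac{\alpha}{2}\Dt{}\n{\vv}^\two+(1-\omega)\Big(\n{D\vv}^\two+\n{\dv\vv}^\two\Big)=\langle\mathfrak{F}',\vv\rangle_{\HH^{-1},\HH^\one_\zero}.
\end{equation*}
Estimating the pairing by $\n{\mathfrak{F}'}_{-1}\n{\vv}_\one$, absorbing the gradient through Young's inequality, and integrating over $[0,\T]$ controls $\n{\uu'}^\two_{\lebl{\infty}}$ and $\n{\uu'}^\two_{\lebhzero{\two}}$ by $\n{\uu'(0)}^\two+\n{\mathfrak{F}'}^\two_{\lebhmoins{\two}}$. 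The initial velocity $\uu'(0)$ is read off the equation at $t=0$, namely $\alpha\uu'(0)=\mathfrak{F}(0)-(1-\omega)A\uu_\zero$, whence $\n{\uu'(0)}\leq C\big(\n{\mathfrak{F}(0)}+\n{A\uu_\zero}\big)$; here $\mathfrak{F}(0)$ is meaningful in $\lbf^\two(\Omega)$ because $\mathfrak{F}\in\lebh{\two}$ with $\mathfrak{F}'\in\lebhmoins{\two}$ embeds into $\C([0,\T];\lbf^\two(\Omega))$. This is precisely why the terms $\n{\mathfrak{F}(0)}^\two$ and $\n{A\uu_\zero}^\two$ appear on the right of (\ref{estma2}).

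\textbf{Step 2 (spatial bootstrap for $\uu$).} I now read $(\ref{e10})_\one$ as the elliptic identity $(1-\omega)A\uu=\mathfrak{F}-\alpha\uu'$ for almost every $t$. The ADN estimate with $k=0$ gives $\n{\uu}_\two\leq C\big(\n{\mathfrak{F}}+\n{\uu'}+\n{\uu}_\one\big)$; taking the supremum in $t$ and using Step~1 together with the $\C([0,\T];\HH^\one_\zero(\Omega))$ bound of $\uu$ furnished by Lemma~\ref{lema1} yields $\uu\in\lebhintersection{\infty}$. The same estimate with $k=1$ gives $\n{\uu}_\three\leq C\big(\n{\mathfrak{F}}_\one+\n{\uu'}_\one+\n{\uu}_\one\big)$; squaring and integrating over $[0,\T]$, and invoking the hypothesis $\mathfrak{F}\in\lebh{\two}$ and the $\lebhzero{\two}$ bound on $\uu'$ from Step~1, yields $\uu\in\lebhhh{\two}$. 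Collecting the four bounds, all dominated by the right-hand side of (\ref{estma2}), produces the claimed estimate with a constant $C_\one$ depending only on $\Omega$, $\T$, $\alpha$ and $\omega$.

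The main obstacle is the rigorous justification of Step~1: a priori $\uu'$ only enjoys the regularity furnished by Lemma~\ref{lema1}, which is insufficient to differentiate the equation in time or to pair the differentiated equation with $\uu'$ in the $\HH^{-1}$--$\HH^\one_\zero$ duality. This is handled in the standard way, either by carrying the argument out on finite-dimensional Galerkin approximations of (\ref{e10}), where $\uu'$ is smooth in $t$ and the energy identity is exact, or by replacing $\uu'$ with backward difference quotients in time and letting the step tend to zero. Once the uniform bound (\ref{estma2}) is available on the approximations, weak and weak-$*$ compactness together with lower semicontinuity of the norms transfer it to the limiting solution, which coincides with the one of Lemma~\ref{lema1} by the uniqueness asserted there.
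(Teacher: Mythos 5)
Your proof follows essentially the same route as the paper: differentiate $(\ref{e10})_\one$ in time, perform the energy estimate on $\vv=\uu'$ against the $\HH^{-1}$--$\HH^\one_\zero$ pairing with $\vv(0)$ read off from the equation at $t=0$, and then recover the higher spatial regularity of $\uu$. The only difference is that you spell out explicitly the Agmon--Douglis--Nirenberg elliptic bootstrap and the Galerkin justification, which the paper leaves implicit when it asserts that (\ref{estma2}) follows from (\ref{estma1}) and (\ref{estm2-bis1}); this is a welcome addition, not a divergence.
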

\begin{preuve}
Derive in terms of $t$ the equation $(\ref{e10})_1$, then we obtain
\begin{equation*}
{\uu''}+\alpha(1-\omega)A\uu'= \mathfrak{F}', \hskip2cm\text{ in
}\Q_\T,
\end{equation*}
and $\uu'_{|\partial \Omega}(t)=0$ for all $t\in [0,\T]$. Let
$\vv=\uu'$, then $\vv$ verify the system
\begin{equation}\label{e10bis}
\left\{
\begin{array}{r c l l}
{\vv'}+\alpha(1-\omega)A\vv&=&
\mathfrak{F}',&  \text{ in }\Q_\T,\\
  \vv(0)&=&\vv_\zero=\mathfrak{F}(0)-\alpha(1-\omega)A\uu_\zero,& \text{ in }\Omega,\\
\vv&=&0,&\text{ on } \Sigma_\T.
\end{array}
\right.
\end{equation}
Multiply by $\vv$ the equation $(\ref{e11})_1$ and integrate on
$\Omega$. It comes
\begin{eqnarray*}
\int_{\Omega} \vv'\cdot\vv+{\alpha(1-\omega)}\int_{\Omega}
A\vv\cdot\vv= <\mathfrak{F}',\vv>_{\HH^{-1},\HH^1_0}.
\end{eqnarray*}
After calculation, we obtain
\begin{eqnarray*}
\frac{1}{2}\Dt{}\n{\vv}^\two+\frac{3\alpha(1-\omega)}{4}\n{
D\vv}^\two+\alpha(1-\omega)\n{\dv{\vv}}^\two\leq
\frac{1}{\alpha(1-\omega)}\n{ \mathfrak{F}'}^\two_{\HH^{-1}}.
\end{eqnarray*}
Integrate on $[0,\T]$ and replace  $\vv$ and $\vv_0$ by their
values
\begin{eqnarray}\label{estm2-bis1}
\frac{1}{2}\n{\uu'}^\two_{\lebl{\infty}}+\frac{3\alpha(1-\omega)}{4}\n{
D\uu'}^\two_{\lebl{\two}}+\alpha(1-\omega)
\n{\dv{\uu'}}^\two_{\lebl{\two}}\hskip1cm\nonumber\\
\leq
\frac{1}{\alpha(1-\omega)}\n{ \mathfrak{F}'}^\two_{\lrm^\two(0,\T,\HH^{-1}(\Omega))}+\frac{1}{2}\Big(\n{\mathfrak{F}(0)}^\two+\alpha(1-\omega)
\n{A\uu_\zero}^\two\Big).\nonumber\\
\end{eqnarray}
 Finally, inequality  (\ref{estma2}) follows from inequality  (\ref{estma1}) and
 (\ref{estm2-bis1}).
\end{preuve}
\subsection{Resolution of the Transport Problems}
We consider the following two linear transport problems,
\begin{equation}\label{e11}
\left\{
\begin{array}{r c l l}
{\sigma'}+(\ww\cdot\nabla)\sigma+\sigma\,\dv\ww&=&-\ep^\mtwo \alpha \dv \ww,&  \text{ in }\Q_\T,\\
\sigma(0,\cdot)&=&\sigma_\zero,&\text{ in }\Omega,
\end{array}
\right.
\end{equation}
and
 \begin{equation}\label{e12}
\left\{
\begin{array}{r c l l}
\tau+\we\Bigl(\tau'+(\ww\cdot\nabla)\tau+\mathbf
g(\nabla\ww,\tau)\Bigl)
&=&2\omega\D[\ww],&  \text{ in }\Q_\T,\\
  \tau(0,\cdot)&=&\tau_\zero,&  \text{ in }\Omega,
\end{array}
\right.
\end{equation}
where  $\sigma_\zero$  and $\tau_0$ are, respectively, some given
function and symmetric tensor defined in $\Omega$. The existence of
solutions to this problems follows from the classical method of
characteristics. (see for example \cite{GS,Ta2,V}).
 The lemmas below
give some estimates of the solutions of these problems.
\begin{lem}[\cite{V}] \label{lem3} Let $\Gamma
\in \C^\one$, $\ww \in \lrm^\one(0,\T,\HH^\three(\Omega))$,
$\ww.\nn=0$ on $\Sigma_\T$, and $\sigma_\zero\in \HH^\two(\Omega)$,
with $\displaystyle{\int_\Omega\sigma_\zero\,d\textbf{x}}=0$. Then
there exists a unique solution
$\sigma\in\C([0,\T];\HHH^\two(\Omega))$ of (\ref{e11}) such that
\begin{eqnarray*}
\int_\Omega\sigma(\cdot,{\rm\bf x})d{\bf\rm x}=0 {\quad in \quad}
[0,\T],
\end{eqnarray*}
%
%
and satisfying the following estimate
\begin{equation*}\label{estm3}
\n{\sigma}_{\lebhh{\infty}}\leq (\n{\sigma_\zero}+{\alpha}{\ep}^\mtwo)\exp
\big(C_\Omegapt\n{\ww}_{\lebhhh{\one}}\big),
\end{equation*}
for some positive constant $C_\Omegapt$ depending on $\Omega$.

If, in addition, $\ww \in \C([0,\T];\HH^\two(\Omega))$, then $\sigma'
\in \C([0,\T];\HHH^1(\Omega))$ satisfies
\begin{equation*}\label{estm4}
\n{\sigma'}_{\lebh{\infty}}\leq
C_\Omegapt\n{\ww}_{\lebhh{\infty}}(\n{\sigma_\zero}+{\alpha}{\ep}^\mtwo)\exp
\Big(C_\Omegapt\n{\ww}_{\lebhhh{\one}}\Big).
\end{equation*}
\end{lem}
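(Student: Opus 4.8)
The plan is to derive existence and uniqueness from the method of characteristics, and the two quantitative bounds from energy estimates closed by Gronwall's lemma. Rewriting $(\ref{e11})$ in conservative form $\sigma'+\dv(\sigma\ww)=-\ep^\mtwo\alpha\,\dv\ww$, the hypotheses $\ww\in\lrm^\one(0,\T;\HH^\three(\Omega))$ and $\ww\cdot\nn=0$ on $\Sigma_\T$ guarantee a well-defined flow, and integrating the affine right-hand side along characteristics produces a unique $\sigma\in\C([0,\T];\HHH^\two(\Omega))$; this is the classical construction already invoked. To see that the mean is conserved, I would integrate the equation over $\Omega$: since $\int_\Omega\dv(\sigma\ww)\,d\textbf{x}=\int_{\Gamma}\sigma\,\ww\cdot\nn$ and $\int_\Omega\dv\ww\,d\textbf{x}=\int_\Gamma\ww\cdot\nn$ both vanish by $\ww\cdot\nn=0$, one gets $\frac{d}{dt}\int_\Omega\sigma\,d\textbf{x}=0$, hence $\int_\Omega\sigma(\cdot,\textbf{x})\,d\textbf{x}=\int_\Omega\sigma_\zero\,d\textbf{x}=0$ on $[0,\T]$.

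For the $\HHH^\two$ estimate I would apply every spatial derivative $\partial^\beta$, $|\beta|\le 2$, to the equation, multiply by $\partial^\beta\sigma$, and integrate. The principal transport term integrates by parts, using $\ww\cdot\nn=0$, into $-\tfrac12\int_\Omega|\partial^\beta\sigma|^\two\,\dv\ww\,d\textbf{x}$, bounded by $\tfrac12\n{\dv\ww}_{\lrm^\infty}\n{\partial^\beta\sigma}^\two$. The remaining terms, namely the commutator $[\partial^\beta,\ww\cdot\nabla]\sigma$, the contribution $\partial^\beta(\sigma\,\dv\ww)$, and the source $\ep^\mtwo\alpha\,\partial^\beta\dv\ww$, are handled by Moser-type product and commutator inequalities, splitting with Hölder exponents suited to the three-dimensional embeddings $\HH^\two\hookrightarrow\lrm^\infty$ and $\HH^\one\hookrightarrow\lrm^\six$ and never placing an $\lrm^\infty$ norm on more than one derivative of $\sigma$. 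Summing over $|\beta|\le 2$ gives
\begin{equation*}
\frac{d}{dt}\n{\sigma}_\two\le C_\Omegapt\,\n{\ww}_\three\big(\n{\sigma}_\two+\alpha\ep^\mtwo\big).
\end{equation*}
Writing $y(t)=\n{\sigma(t)}_\two+\alpha\ep^\mtwo$, this reads $y'\le C_\Omegapt\n{\ww}_\three\,y$, so Gronwall's lemma on $[0,t]$ yields $y(t)\le y(0)\exp\big(C_\Omegapt\int_0^t\n{\ww}_\three\big)$, which is exactly the claimed bound in $\lebhh{\infty}$.

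For the second assertion I would read $\sigma'$ off the equation, $\sigma'=-(\ww\cdot\nabla)\sigma-\sigma\,\dv\ww-\ep^\mtwo\alpha\,\dv\ww$, and estimate its $\HHH^\one$ norm by the Sobolev product rule, each term being controlled by $C_\Omegapt\n{\ww}_\two\big(\n{\sigma}_\two+\alpha\ep^\mtwo\big)$; substituting the bound on $\n{\sigma}_\two$ just obtained and taking the supremum in $t$ gives the stated estimate in $\lebh{\infty}$. The time continuity of $\sigma$ and of $\sigma'$ then follows from the continuity of the characteristic flow together with these uniform-in-time bounds, and uniqueness follows either from the characteristics or by applying the $\lrm^\two$ energy estimate to the difference of two solutions.

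The main obstacle is precisely the top-order energy step: because $\sigma$ lies only in $\HHH^\two$, the commutator $[\partial^\beta,\ww\cdot\nabla]\sigma$ sits at the borderline of the product rule, and its worst piece $\partial\ww\cdot\nabla\partial\sigma$ cannot be absorbed through $\n{\nabla\sigma}_{\lrm^\infty}$. The way out is to estimate this piece as $\n{\partial\ww}_{\lrm^\infty}\n{\sigma}_\two$, which is finite exactly because $\ww$ carries one derivative more than $\sigma$, i.e. $\partial\ww\in\HH^\two\hookrightarrow\lrm^\infty$; this systematic exploitation of the gain $\ww\in\HH^\three$ is what closes every term with a factor $\n{\ww}_\three$.
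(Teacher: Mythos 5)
Your proof is correct: the paper itself gives no proof of this lemma (it is quoted from Valli \cite{V}, with existence by characteristics merely asserted), and your argument --- conservative form plus $\ww\cdot\nn=0$ for the mean, an $\HHH^\two$ energy estimate with the borderline commutator term $\partial\ww\cdot\nabla\partial\sigma$ absorbed via $\HH^\two\hookrightarrow\lrm^\infty$ applied to $\partial\ww$, Gronwall on $y=\n{\sigma}_\two+\alpha\ep^\mtwo$, and reading $\sigma'$ off the equation --- is exactly the standard route taken in the cited references. The only discrepancy is that your argument naturally produces $\n{\sigma_\zero}_\two$ where the stated estimate writes $\n{\sigma_\zero}$ (the $\lrm^\two$ norm), which appears to be a typo in the lemma rather than a defect in your proof.
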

\begin{lem}[\cite{GS}] \label{lem4} Let $\Omega
\subset \mathbb{R}^\three$ be a  domain of class  $\C^\three$, $\ww
\in \lrm^\one(0,\T;\HH^\three(\Omega)\cap \HH^\one_\zero(\Omega))$
and $\tau_\zero\in \HH^\two(\Omega)$. Then there exists a unique
solution $\tau \in \C([0,\T];\HH^\two(\Omega))$ of (\ref{e12}), such
that
\begin{equation*}\label{estm5}
\n{\tau}_{\lebhh{\infty}}\leq
\Big(\n{\tau_\zero}^\two+\frac{2\omega}{C_\Omegapt\we}\Big)\exp\Big(C_\Omegapt\n{\ww}_{\lebhhh{\one}}\Big),
\end{equation*}
for some positive constant $C_\Omegapt$ depending on $\Omega$.

If, in addition, $\ww \in
\C([0,\T];\HH^\two(\Omega)\cap\HH^\one_\zero(\Omega))$,  then $\tau'
\in \C([0,\T];\HH^\one(\Omega))$ satisfies
\begin{equation*}\label{estm6}
\n{\tau'}_{\lebh{\infty}}\leq
C_\zero\Big(\n{\ww}_{\lebhh{\infty}}+\frac{1}{C_\Omegapt\we}\Big)\Big(\n{\tau_\zero}
+\frac{2\omega}{C_\Omegapt\we}\Big)\exp\Big(C_\Omegapt\n{\ww}_{\lebhhh{\one}}\Big).
\end{equation*}
\end{lem}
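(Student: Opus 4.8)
The plan is to regard (\ref{e12}), for fixed $\ww$, as a linear transport equation for $\tau$ with a zeroth-order term and a prescribed source. Rewriting it as
\begin{equation*}
\tau'+(\ww\cdot\nabla)\tau+\mathbf g(\nabla\ww,\tau)+\frac{1}{\we}\tau=\frac{2\omega}{\we}\D[\ww],\qquad\tau(0,\cdot)=\tau_\zero,
\end{equation*}
one sees that along the characteristics $\dot X=\ww(t,X)$ this becomes a linear matrix differential equation in $\tau$ whose coefficients depend linearly on $\nabla\ww$. Since $\ww\in\lrm^\one(0,\T;\HH^\three(\Omega))$ and $\HH^\three(\Omega)\hookrightarrow\C^\one(\overline\Omega)$ in dimension three, the associated flow is well defined, and existence and uniqueness of $\tau\in\C([0,\T];\HH^\two(\Omega))$ follow from the classical method of characteristics cited above. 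It then remains to establish the two a priori estimates, which I would derive formally and justify by mollifying $\ww$ and $\tau_\zero$ and passing to the limit.

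For the $\HH^\two$ bound I would carry out an energy estimate. Applying $\partial^\beta$ for each multi-index $|\beta|\leq 2$, multiplying in $\lbf^\two(\Omega)$ by $\partial^\beta\tau$ and summing, the decisive contribution is the transport term: at top order
\begin{equation*}
\int_\Omega(\ww\cdot\nabla)\partial^\beta\tau\cdot\partial^\beta\tau=-\tfrac12\int_\Omega(\dv\ww)\,\abs{\partial^\beta\tau}^\two\leq C_\Omegapt\n{\dv\ww}_{\lrm^\infty}\n{\tau}_2^\two,
\end{equation*}
so that, thanks to $\HH^\three\hookrightarrow\C^\one$, no derivative of $\tau$ is lost and the cost is only $C_\Omegapt\n{\ww}_3\n{\tau}_2^\two$. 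The commutators $[\partial^\beta,\ww\cdot\nabla]\tau$ and the terms coming from $\mathbf g(\nabla\ww,\tau)$ are sums of products of derivatives of $\ww$ of order $\leq 3$ with derivatives of $\tau$ of order $\leq 2$; using $\HH^\two\hookrightarrow\lrm^\infty$ and $\HH^\one\hookrightarrow\lrm^6$ together with H\"older's inequality, each is bounded in $\lbf^\two$ by $C_\Omegapt\n{\ww}_3\n{\tau}_2$. The zeroth-order term yields $-\we^{-1}\n{\tau}_2^\two\leq 0$, and the source, by Cauchy--Schwarz, gives $\tfrac{2\omega}{\we}\n{\D[\ww]}_2\n{\tau}_2\leq C_\Omegapt\tfrac{2\omega}{\we}\n{\ww}_3\n{\tau}_2$. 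After absorbing the lower-order products by Young's inequality, one arrives at
\begin{equation*}
\Dt{}\n{\tau}_2^\two\leq C_\Omegapt\n{\ww}_3\Big(\n{\tau}_2^\two+\frac{2\omega}{\we}\Big),
\end{equation*}
and since $t\mapsto\n{\ww(t)}_3$ lies in $\lrm^\one(0,\T)$, Gronwall's lemma gives the stated exponential estimate.

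For the second estimate I would avoid differentiating the equation and instead solve it algebraically for $\tau'$,
\begin{equation*}
\tau'=\frac{2\omega}{\we}\D[\ww]-\frac{1}{\we}\tau-(\ww\cdot\nabla)\tau-\mathbf g(\nabla\ww,\tau),
\end{equation*}
estimating the right-hand side directly in $\HH^\one(\Omega)$. Because $\HH^\two(\Omega)$ is a Banach algebra in dimension three and multiplies $\HH^\one(\Omega)$ into itself, the products $(\ww\cdot\nabla)\tau$ and $\mathbf g(\nabla\ww,\tau)$ are controlled in $\HH^\one$ by $C_\Omegapt\n{\ww}_2\n{\tau}_2$, the source by $C_\Omegapt\tfrac{2\omega}{\we}\n{\ww}_2$, and $\we^{-1}\tau$ by $\we^{-1}\n{\tau}_2$. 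Taking the supremum over $[0,\T]$ and inserting the $\HH^\two$ bound just obtained yields the claimed inequality for $\n{\tau'}_{\lebh{\infty}}$; moreover $\tau'\in\C([0,\T];\HH^\one(\Omega))$ because the right-hand side is continuous in $t$ with values in $\HH^\one(\Omega)$, being assembled from $\tau\in\C([0,\T];\HH^\two(\Omega))$ and $\ww\in\C([0,\T];\HH^\two(\Omega)\cap\HH^\one_\zero(\Omega))$.

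The main obstacle is the $\HH^\two$ energy estimate, and within it the top-order transport term: the integration by parts must be arranged so that its only cost is $\n{\dv\ww}_{\lrm^\infty}$ rather than a third derivative of $\tau$, and every remaining commutator and every contribution of the bilinear term $\mathbf g$ has to be bounded by $\n{\ww}_3\n{\tau}_2$ so that the Gronwall coefficient stays in $\lrm^\one(0,\T)$. This integrability in time is precisely what makes the exponent $C_\Omegapt\n{\ww}_{\lebhhh{\one}}$ appear in the final bound.
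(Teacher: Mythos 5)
The paper does not prove this lemma: it is quoted from Guillop\'e--Saut \cite{GS}, with existence deferred to the classical method of characteristics and the estimates taken as known. Your argument --- characteristics for existence, an $\HH^\two$ energy estimate with the top-order transport term integrated by parts against $\dv\ww$ (licit since $\ww\in\HH^\one_\zero$ kills the boundary term), commutators and the bilinear term $\mathbf g$ bounded by $\n{\ww}_\three\n{\tau}_\two$ via the Sobolev embeddings, Gronwall, and then recovering $\tau'$ algebraically in $\HH^\one$ using that $\HH^\two(\Omega)$ multiplies $\HH^\one(\Omega)$ --- is exactly the standard proof behind the cited result and is sound.
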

\subsection{{Proof of Theorem }\ref{thm2}}
We are now in a position to prove the local existence of a solution to problem (\ref{e8}). We apply  the Theorem of fixed-point of Schauder.\\

 Take $\T>0$, $\mathfrak{B}_\one$, $\mathfrak{B}_\two>0$, and define
\begin{eqnarray*}
\mathfrak{R}_\T&=&\{(\ww,\pi,\psi), \\&&\ww\in \conthhintersection
\cap \lebhhhbf{\infty},\ww'\in \contllbf\cap \lebhzerobf{\two}\\
&&\pi\in\lebhh{\infty},\pi'\in \lebh{\infty},\\
&&\psi\in\lebhhbf{\infty},\psi'\in \lebhbf{\infty},
\\
&&\ww(0)=\uu_\zero,\,\pi(0)=\sigma_0,\psi(0)=\tau_\zero \textup{
in } \Omega,
\ww=0 \textup{ in }\Sigma_\T,\\
&&\n{\ww}^\two_{\lebhintersection{\infty}}+\n{\ww}^\two_{\lebhhh{\two}}+\n{\ww'}^\two_{\lebl{\infty}}+\n{\ww'}^\two_{\lebhzero{\two}}\leq\mathfrak{B}_\one,
\\&&\n{\pi}_{\lebhh{\infty}}+\n{\psi}_{\lebhh{\infty}}\leq
\mathfrak{B}_\one,
\\&&\n{\pi'}_{\lebh{\infty}}+\n{\psi'}_{\lebh{\infty}}\leq
\mathfrak{B}_\two,
\\&&\frac{\mathfrak{m}_\one}{2}\leq
\alpha+\ep^\two\pi(t,x)\leq 2\mathfrak{M}_\one\, , \quad \textup{
in } \overline{Q}_{\T}\}.
\end{eqnarray*}
Choose $\mathfrak{B}_\one$ such that
\begin{equation}\label{choix-B1-1}
\mathfrak{B}_\one>\max\{C_\four\n{A\uu_\zero}^\two,\n{\sigma_0}_\two,\n{\tau_\zero}_\two\},
\end{equation}
then $(\uu_0,\sigma_0,\tau_\zero)\in \mathfrak{R}_\T$. In fact,
$\ww$ is a solution of problem
\begin{equation}\label{e10-tri}
\left\{
\begin{array}{r c l l}
\ww(\cdot) \in \HH^\one(\Omega),\\
{\ww'}+(1-\omega)A\ww&=&
0,&   \text{ p.p. in } \mathbb{R}_+,\\
  \ww(0)&=&\uu_\zero,& \text{in }\Omega,\\
\ww&=&0,&\text{on } \Sigma_\T.
\end{array}
\right.
\end{equation}
Using estimate (\ref{estma2}), there exists a constant $C_4$
such that
\begin{eqnarray*}\label{estm2}
\n{\ww'}^\two_{\lebhhhbf{\two}}+\n{\ww'}^\two_{\lebhintersectionbf{\infty}}+\n{\ww}^\two_{\lebhzerobf{\two}}+\n{\ww}^\two_{\leblbf{\infty}}\leq
 C_4\n{A\uu_\zero}^\two.
\end{eqnarray*}
Thus the choose of $\mathfrak{B}_\one$ in (\ref{choix-B1-1}) is
enough for prove that $\mathfrak{R}_\T$ is non empty for each
$\T>0$.\\ \\Define now the application mapping $\mathfrak{K}$ in
this way
$$\begin{array}{rccl}
  \mathfrak{K}: & \mathfrak{R}_\T & \longrightarrow & \mathfrak{X}_\T=\conthzerobf\times \conth \times \conthbf\\
    &(\ww,\pi,\psi) & \longrightarrow & (\uu,\sigma,\tau)
  \end{array}
$$
where $\uu$, $\sigma$ and $\tau$ are solution of
 (\ref{e10}), (\ref{e11}) and (\ref{e12}), respectively, with
 \begin{eqnarray*}
\mathfrak{F}&=&\alpha\ff+(1-\omega)\frac{\ep^\two\pi}{\alpha+\ep^\two\pi}A\ww+\frac{\ep^\two}
{\alpha+\ep^\two\pi}(\pi-w(\pi))\nabla\pi-\alpha(\ww.\nabla)\ww-\nabla\pi+ \textbf{\dv} \psi,\label{defF1}\\
\mathcal{G}&=&-\ep^{-2}\alpha\,\dv\ww.\label{defG1}
\end{eqnarray*}
If we take
\begin{eqnarray}\label{choix-B1-2}
\mathfrak{B}_\one&>&\max\Big\{C_\four\n{A\uu_\zero}^\two,e^{\sqrt{2}}
\left(\n{\sigma_\zero}_\two+\n{\tau_\zero}_\two+1+\frac{2\omega}{C_\three\we}\right),C_\two(2C_\five+1)\n{A\uu_\zero}^\two\nonumber\\
&&+C_\five\n{A\uu_\zero}^\four+3\Big(2(1+\n{w}_\mathcal{C}^\two)\n{\sigma_\zero}_\one^\two+\n{\tau_\zero}_\one^\two\Big)\nonumber\\
&&+3\n{\ff(0)}^\two+3\n{\ff}^\two_{\lebh{\two}}+3\n{\ff'}^\two_{\lebhmoins{\two}}
\Big \},
\end{eqnarray}
and
\begin{eqnarray}\label{choix-B2-1}
\mathfrak{B}_\two&>&e^{\sqrt{2}}\left\{C_\six\left(\n{\sigma_\zero}_\two+\n{\tau_\zero}_\two+1+\frac{2\omega}{C_\three\we}\right)+\frac{1}{\we}
\left(\n{\tau_\zero}_\two+ \frac{2\omega}{C_\three\we}\right)
 \right\},
\end{eqnarray}
and for all  $\T$ small enough such that
\begin{equation}
\T\leq
\T^*=\min\left(\frac{\mathfrak{B}_\one}{2C_\two(4C_\four(1+\n{w}_\mathcal{C}^\two)\mathfrak{B}_\one^\two+3\mathfrak{B}_\two^\two},
\frac{2}{C_\six^\two\mathfrak{B}_\one}\right),
\end{equation}
we have $\mathfrak{K}(\mathfrak{R}_{\T})\subset
\mathfrak{R}_{\T}$.\\

We now use Schauder fixed point theorem. The mapping $ \mathfrak{K}$
is defined from  convex, bounded and no empty set
$\mathfrak{R}_{\T}$ into $\mathfrak{X}_\T$. To finish, we need to  show the continuity of $ \mathfrak{K}$ in $\mathfrak{X}_\T$.
\begin{lem}
To  show the continuity of $ \mathfrak{K}$ in $\mathfrak{X}_\T$, it is enough
to show the continuity of $ \mathfrak{K}$ in
$$\mathfrak{Y}_\T=C([0,\T]; \mathbf{L}^\two(\Omega))\times C([0,\T];
\mathrm{L}^\two(\Omega))\times C([0,\T];
\mathbf{L}^\two(\Omega)).$$
\end{lem}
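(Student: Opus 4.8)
The plan is to reduce the statement to a single interpolation argument, exploiting the crucial fact---already established above---that $\mathfrak{K}(\mathfrak{R}_\T)\subset\mathfrak{R}_\T$. This inclusion means that every image $(\uu,\sigma,\tau)=\mathfrak{K}(\ww,\pi,\psi)$ is controlled, uniformly in the data, in the strong $\HH^\two$-based norms defining $\mathfrak{R}_\T$, whereas the target space $\mathfrak{X}_\T=\conthzerobf\times\conth\times\conthbf$ only asks for control of the $\HH^\one$-norm. The whole point is that, on a set bounded in $\HH^\two$, convergence in the $\lbf^\two$-based topology of $\mathfrak{Y}_\T$ automatically upgrades to convergence in the $\HH^\one$-based topology of $\mathfrak{X}_\T$.

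First I would note that the $\mathfrak{X}_\T$-topology is stronger than the $\mathfrak{Y}_\T$-topology, since $\n{f}\leq\n{f}_\one$ for each component; hence a sequence $(\ww_n,\pi_n,\psi_n)$ in $\mathfrak{R}_\T$ converging in $\mathfrak{X}_\T$ converges \emph{a fortiori} in $\mathfrak{Y}_\T$. Assuming the continuity of $\mathfrak{K}$ in $\mathfrak{Y}_\T$, the corresponding images $(\uu_n,\sigma_n,\tau_n)$ then converge to $(\uu,\sigma,\tau)$ in $\mathfrak{Y}_\T$. Next I would invoke the interpolation inequality
\begin{equation*}
\n{f}_\one\leq C_\Omegapt\,\n{f}_\two^{1/2}\,\n{f}^{1/2},\qquad f\in\HH^\two(\Omega),
\end{equation*}
valid for scalar-, vector- and tensor-valued functions. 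Applying it pointwise in $t$ to the differences $\uu_n-\uu$, $\sigma_n-\sigma$ and $\tau_n-\tau$, and taking the supremum over $t\in[0,\T]$, yields for instance
\begin{equation*}
\n{\uu_n-\uu}_{\conthzerobf}\leq C\,\n{\uu_n-\uu}_{\conthhintersection}^{1/2}\,\n{\uu_n-\uu}_{\contllbf}^{1/2},
\end{equation*}
together with the analogous bounds for the $\sigma$- and $\tau$-components. Since all the functions involved belong to $\mathfrak{R}_\T$, the first factor on the right is bounded uniformly in $n$, while the second tends to $0$ by the $\mathfrak{Y}_\T$-convergence; hence $(\uu_n,\sigma_n,\tau_n)\to(\uu,\sigma,\tau)$ in $\mathfrak{X}_\T$, which is exactly the continuity of $\mathfrak{K}$ in $\mathfrak{X}_\T$.

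The only point requiring care---and the place where the argument would break down if $\mathfrak{R}_\T$ were not chosen as it is---is the uniform boundedness in the $\HH^\two$-norms of the \emph{differences} entering the interpolation. Each $(\uu_n,\sigma_n,\tau_n)$ lies in $\mathfrak{R}_\T$ and is therefore bounded in $\C([0,\T];\HH^\two(\Omega))$ (and in $\lrm^\infty(0,\T;\HHH^\two(\Omega))$ for the density component); one must still check that the limit $(\uu,\sigma,\tau)$ enjoys the same bound, which follows from the weak lower semicontinuity of these norms together with the closed, convex and bounded structure of $\mathfrak{R}_\T$. Granting this, the differences are bounded in $\HH^\two$ uniformly in $n$, and the interpolation step closes the argument.
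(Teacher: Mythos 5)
Your argument is correct, but it takes a genuinely different route from the paper's. The paper proves the lemma by a soft compactness argument: it invokes the compactness of $\mathfrak{R}_\T$ in $\mathfrak{X}_\T$ (via Simon's compactness theorem, which uses the uniform bounds on the time derivatives built into the definition of $\mathfrak{R}_\T$); since the images $\mathfrak{K}(\ww_n,\pi_n,\psi_n)$ remain in $\mathfrak{R}_\T$, every subsequence admits a sub-subsequence converging in $\mathfrak{X}_\T$, and the assumed $\mathfrak{Y}_\T$-convergence forces every such limit to coincide with $\mathfrak{K}(\ww,\pi,\psi)$, so the whole sequence converges in $\mathfrak{X}_\T$ by uniqueness of the accumulation point. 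You instead use only the uniform $\HH^\two$-bounds encoded in $\mathfrak{R}_\T$ together with the interpolation inequality $\n{f}_\one\leq C\,\n{f}_\two^{1/2}\n{f}^{1/2}$, which upgrades $\lbf^\two$-convergence of the images directly to $\HH^\one$-convergence, uniformly in $t$. Your route is more elementary (no compactness theorem is needed for this step) and more quantitative, since it yields an explicit modulus of continuity; the paper's route is softer but costs nothing extra, because the compactness of $\mathfrak{R}_\T$ in $\mathfrak{X}_\T$ is needed anyway to apply Schauder's fixed-point theorem. One small simplification of your last paragraph: you do not need weak lower semicontinuity to bound the limit $(\uu,\sigma,\tau)$ in $\HH^\two$ --- continuity is tested at points $(\ww,\pi,\psi)\in\mathfrak{R}_\T$, so the invariance $\mathfrak{K}(\mathfrak{R}_\T)\subset\mathfrak{R}_\T$, established just before the lemma, already places $(\uu,\sigma,\tau)=\mathfrak{K}(\ww,\pi,\psi)$ in $\mathfrak{R}_\T$ and hence gives the uniform $\HH^\two$-bound on the differences.
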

\begin{proof}
Let $\Big((\ww_n,\pi_n,\psi_n)\Big)_n$ be a sequence of   $\mathfrak{R}_{\T}$ and tends to $(\ww,\pi,\psi)$, such that:
$$(\uu_n,\sigma_n,\tau_n)=\mathfrak{K}(\ww_n,\pi_n,\psi_n) \textup{ and } (\uu,\sigma,\tau)=\mathfrak{K}(\ww,\pi,\psi).$$
Suppose that $\mathfrak{K}$ is continuous in $\mathfrak{Y}_\T$, then the sequence $\Big(\mathfrak{K}(\ww_n,\pi_n,\psi_n)_n\Big)$ tends to $\mathfrak{K}(\ww,\pi,\psi)$ in $\mathfrak{Y}_\T$, \textit{i.e.}
\begin{equation}\label{lim1} \lim_{n \longrightarrow \infty}\n{(\uu_n,\sigma_n,\tau_n)-(\uu,\sigma,\tau)}_{\mathfrak{Y}_\T}=0.
\end{equation}
$\mathfrak{R}_{\T}$ is a compact set in  $\mathfrak{X}_\T$ (see for instance \cite{S}). Using (\ref{lim1}), we can extract of $\Big((\uu_n,\sigma_n,\tau_n)\Big)_n$ a subsequence converges in $\mathfrak{X}_\T$ to  the unique accumulation point $(\uu,\sigma,\tau)$. Then the sequence $\Big((\uu_n,\sigma_n,\tau_n)\Big)_n=\Big(\mathfrak{K}(\ww_n,\pi_n,\psi_n)\Big)_n$ converges to $(\uu,\sigma,\tau)=\mathfrak{K}(\ww,\pi,\psi)$ in $\mathfrak{X}_\T$. This proved the  continuity of $\mathfrak{K}$  in
$\mathfrak{X}_\T$.
\end{proof}
\begin{lem}
$ \mathfrak{K}$ is continuous in $\mathfrak{Y}_\T$.
\end{lem}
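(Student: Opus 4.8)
The plan is to exploit the uniform higher-order bounds contained in the definition of $\mathfrak{R}_\T$, together with Hilbert interpolation, so as to upgrade the $\lbf^\two$-convergence of the data into convergence of their first derivatives. Let $(\ww_n,\pi_n,\psi_n)\to(\ww,\pi,\psi)$ in $\mathfrak{Y}_\T$, all of them in $\mathfrak{R}_\T$, and set $(\uu_n,\sigma_n,\tau_n)=\mathfrak{K}(\ww_n,\pi_n,\psi_n)$ and $(\uu,\sigma,\tau)=\mathfrak{K}(\ww,\pi,\psi)$. Since the sequences are bounded in $\lrm^\infty(0,\T;\HH^\two(\Omega))$ (and $\ww_n$ moreover in $\lrm^\two(0,\T;\HH^\three(\Omega))$), the interpolation inequality $\n{v}_\one\leq C\,\n{v}^{1/2}\,\n{v}_\two^{1/2}$ forces $\ww_n-\ww$, $\pi_n-\pi$ and $\psi_n-\psi$ to tend to $0$ in $\C([0,\T];\HH^\one)$; in particular $\nabla(\ww_n-\ww)$, $\nabla(\pi_n-\pi)$ and $\dv(\psi_n-\psi)\to0$ in $\C([0,\T];\lbf^\two(\Omega))$. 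Writing $\delta\uu=\uu_n-\uu$, $\delta\ww=\ww_n-\ww$ and so on, I will run an $\lbf^\two$-energy estimate on each of the three difference equations and close by Gronwall. Throughout, the embeddings $\HH^\two\hookrightarrow\lrm^\infty$ and $\HH^\three\hookrightarrow\lrm^\infty$ keep the coefficients bounded (the latter only in $\lrm^\two(0,\T;\lrm^\infty)$, which still yields an integrable weight in time), while $\alpha+\ep^\two\pi_n\geq\mathfrak{m}_\one/2$ keeps the fractions and their $\pi$-differences under control and (\ref{condw}) gives $w(\pi_n)-w(\pi)\to0$.

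For the velocity, subtracting the two copies of (\ref{e10}) yields
\[
\alpha\,\delta\uu'+(1-\omega)A\,\delta\uu=\mathfrak{F}_n-\mathfrak{F},\qquad \delta\uu(0)=0,
\]
and testing against $\delta\uu$ gives
\[
\frac{\alpha}{2}\frac{d}{dt}\n{\delta\uu}^\two+(1-\omega)\big(\n{\nabla\delta\uu}^\two+\n{\dv\delta\uu}^\two\big)=\int_\Omega(\mathfrak{F}_n-\mathfrak{F})\cdot\delta\uu.
\]
All the contributions to $\mathfrak{F}_n-\mathfrak{F}$ except one are products of a factor bounded in $\mathfrak{R}_\T$ with a difference that vanishes in $\C([0,\T];\HH^\one)$, so once paired with $\delta\uu$ they are of the form $(\text{quantity}\to0)\cdot\n{\delta\uu}$. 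The single genuinely delicate contribution, obtained after splitting the $A$-term of $\mathfrak{F}_n-\mathfrak{F}$ into a harmless coefficient-difference piece $\big(\tfrac{\ep^\two\pi_n}{\alpha+\ep^\two\pi_n}-\tfrac{\ep^\two\pi}{\alpha+\ep^\two\pi}\big)A\ww$ and the essential piece $\tfrac{\ep^\two\pi_n}{\alpha+\ep^\two\pi_n}A\,\delta\ww$, is the latter, since $A\,\delta\ww$ carries two derivatives and does not converge in $\lbf^\two$. Here I would integrate by parts once, transferring a derivative onto the boundary-vanishing factor $\delta\uu$; this replaces $A\,\delta\ww$ by $\nabla\delta\ww\to0$ at the cost of a term $\int\tfrac{\ep^\two\pi_n}{\alpha+\ep^\two\pi_n}\nabla\delta\ww:\nabla\delta\uu$, which is absorbed into the coercive left-hand side through Young's and Korn's inequalities. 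This integration-by-parts-and-absorption step is the main obstacle of the proof.

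The transport equations are handled in the same spirit. Subtracting the $\sigma$-equations gives
\[
\delta\sigma'+(\ww_n\cdot\nabla)\delta\sigma+\delta\sigma\,\dv\ww_n=-(\delta\ww\cdot\nabla)\sigma-\sigma\,\dv\delta\ww-\ep^{\mtwo}\alpha\,\dv\delta\ww,\qquad \delta\sigma(0)=0,
\]
and testing against $\delta\sigma$, the transport term integrates by parts to $-\tfrac12\int(\dv\ww_n)\,\delta\sigma^\two$ (the boundary integral vanishes because $\ww_n=0$ on $\Gamma$), bounded by $C\,\n{\dv\ww_n}_{\lrm^\infty}\n{\delta\sigma}^\two$; the right-hand side is a sum of bounded factors times $\delta\ww$ or $\nabla\delta\ww\to0$. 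For the tensor, writing $\mathbf g(\nabla\ww_n,\tau_n)-\mathbf g(\nabla\ww,\tau)=\mathbf g(\nabla\ww_n,\delta\tau)+\mathbf g(\nabla\delta\ww,\tau)$ and using the bilinearity of $\mathbf g$, the difference equation is
\[
\delta\tau+\we\big(\delta\tau'+(\ww_n\cdot\nabla)\delta\tau+\mathbf g(\nabla\ww_n,\delta\tau)\big)=-\we\big((\delta\ww\cdot\nabla)\tau+\mathbf g(\nabla\delta\ww,\tau)\big)+2\omega\D[\delta\ww],
\]
with $\delta\tau(0)=0$; testing against $\delta\tau$, the transport and $\mathbf g(\nabla\ww_n,\delta\tau)$ terms are controlled by $C\,\n{\nabla\ww_n}_{\lrm^\infty}\n{\delta\tau}^\two$, while $\mathbf g(\nabla\delta\ww,\tau)$ and $\D[\delta\ww]$ contain the vanishing factor $\nabla\delta\ww$.

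In each case, integrating in time and using the homogeneous initial values $\delta\uu(0)=\delta\sigma(0)=\delta\tau(0)=0$, one reaches an inequality of the shape $\n{\delta\,\cdot\,(t)}^\two\leq\int_0^t k_n(s)\,\n{\delta\,\cdot\,(s)}^\two\,ds+\rho_n$, where $k_n$ is bounded in $\lrm^\one(0,\T)$ uniformly in $n$ (via the $\lrm^\two(0,\T;\lrm^\infty)$ bounds on $\dv\ww_n$ and $\nabla\ww_n$) and $\rho_n\to0$ collects all the input-difference quantities already shown to vanish. Gronwall's lemma then yields $\n{\delta\uu}_{\C([0,\T];\lbf^\two)}+\n{\delta\sigma}_{\C([0,\T];\lrm^\two)}+\n{\delta\tau}_{\C([0,\T];\lbf^\two)}\to0$, that is $\mathfrak{K}(\ww_n,\pi_n,\psi_n)\to\mathfrak{K}(\ww,\pi,\psi)$ in $\mathfrak{Y}_\T$, which is the asserted continuity.
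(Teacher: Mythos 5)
Your proof is correct and follows the same overall strategy as the paper's: write the difference system for $(\uu_n-\uu,\sigma_n-\sigma,\tau_n-\tau)$, perform $\lrm^\two$-energy estimates on each of the three equations using the uniform bounds built into $\mathfrak{R}_\T$, and conclude with Gronwall's lemma. The one place where you genuinely diverge is the treatment of the second-order term in $\mathfrak{F}_n-\mathfrak{F}$: the paper keeps $\n{\pi_n A\ww_n-\pi A\ww}^\two$ inside the quantity $\ell_n$ and relies (implicitly) on $A\ww_n\to A\ww$ in $\lrm^\two(0,\T;\lbf^\two(\Omega))$, which requires interpolating the $\C([0,\T];\lbf^\two)$-convergence against the $\lrm^\two(0,\T;\HH^\three)$-bound; you instead integrate by parts against the boundary-vanishing test function $\delta\uu$, so that only the $\C([0,\T];\HH^\one)$-convergence of $\ww_n-\ww$ (which you establish explicitly by interpolation against the $\HH^\two$-bound) is needed, the leading piece being absorbed into the coercive left-hand side by Young's inequality. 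This buys a slightly more economical use of the a priori bounds and makes explicit an interpolation step the paper glosses over; the only small omission is the commutator term $\int\nabla\bigl(\tfrac{\ep^\two\pi_n}{\alpha+\ep^\two\pi_n}\bigr)\cdot$(lower-order products) produced by your integration by parts, which is harmless since $\nabla\pi_n$ is bounded in $\lrm^\six(\Omega)$ and is controlled by the same H\"older--Young scheme.
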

\begin{preuve}
Let $\Big((\ww_n,\pi_n,\psi_n)\Big)_n$ be a sequence of   $\mathfrak{R}_{\T}$ and tends to $(\ww,\pi,\psi)$, such that:
$$(\uu_n,\sigma_n,\tau_n)=\mathfrak{K}(\ww_n,\pi_n,\psi_n) \textup{ and } (\uu,\sigma,\tau)=\mathfrak{K}(\ww,\pi,\psi).$$
Consider   two systems. The first is :
\begin{equation}\label{e9-1}
\left\{
\begin{array}{r c l l}
\alpha{\uu_n'}+(1-\omega)A\uu_n&=&
 \mathfrak{F}_n,\\
{\sigma_n'}+(\ww_n.\nabla)\sigma_n+\sigma_n\dv\ww_n&=&\mathcal{G}_n,\\
\tau_n+\we\{\tau_n'+(\ww_n.\nabla)\tau_n+\mathbf g(\nabla\ww_n,\tau_n)\}
&=&2\omega\DD[\ww_n],&  \text{in }\Q_\T,\\
  \uu_n(0,x)&=&\uu_\zero(x),&\\
\sigma_n(0,x)&=&\sigma_\zero(x),&\\
\tau_n(0,x)&=&\tau_\zero(x),&  \text{in }\Omega,\\
\uu_n&=&0,&\text{on } \Sigma_\T,
\end{array}
\right.
\end{equation}
with
\begin{eqnarray*}
\mathfrak{F}_n&=&\FF(\ww_n,\pi_n,\psi_n)-\alpha(\ww_n.\nabla)\ww_n-\nabla\pi_n+\textbf{\dv} \psi_n,\label{defF1-2}\\
\mathcal{G}_n&=&-\ep^{-2}\dv\ww_n.\label{defG1-2}
\end{eqnarray*}
And, the second is:
\begin{equation}\label{e9-2}
\left\{
\begin{array}{r c l l}
\alpha{\uu'}+(1-\omega)A\uu&=&
\mathfrak{F},\\
{\sigma'}+(\ww.\nabla)\sigma+\sigma\dv\ww&=&\mathcal{G},\\
\tau+\we\{\tau'+(\ww.\nabla)\tau+\mathbf g(\nabla\ww,\tau)\}
&=&2\omega\DD[\ww],&  \text{in }\Q_\T,\\
  \uu(0,x)&=&\uu_\zero(x),&\\
\sigma(0,x)&=&\sigma_\zero(x),&\\
\tau(0,x)&=&\tau_\zero(x),&  \text{in }\Omega,\\
\uu&=&0,&\text{on } \Sigma_\T,
\end{array}
\right.
\end{equation}
with
\begin{eqnarray*}
\mathfrak{F}&=&\FF(\ww,\pi,\psi)-\alpha(\ww.\nabla)\ww-\nabla\pi+\textbf{\dv} \psi,\label{defF1}\\
\mathcal{G}&=&-\ep^{-2}\dv\ww.\label{defG1}
\end{eqnarray*}
Let $\vv_n=\uu_n-\uu$, $q_n=\sigma_n-\sigma$ and $\SSS_n=\tau_n-\tau$. Using (\ref{e9-1}) and (\ref{e9-2}), we obtain, in $\Q_\T$:
\begin{equation}\label{e9-3}
\left\{
\begin{array}{r c l l}
\alpha{\vv_n'}+(1-\omega)A\vv_n&=&
\mathfrak{F}_1,\\
{q_n'}+(\ww_n.\nabla)q_n+
q_n\dv\ww_n&=&\mathcal{G}_1-\big((\ww_n-\ww).\nabla\big)\sigma-\sigma\dv(\ww_n-\ww),\\
\SSS_n+\we\{\SSS_n'+(\ww_n.\nabla)\SSS_n+\mathbf g(\nabla\ww_n,\SSS_n)\}
&=&\mathcal{H}_1-\we\Big\{\big((\ww_n-\ww).\nabla\big)\tau+\mathbf g\big(\nabla(\ww_n-\ww),\tau\big)\Big\},
\end{array}
\right.
\end{equation}
with the boundary conditions:
\begin{equation}\label{e9-4}
\left\{
\begin{array}{r c l l}
  \vv_n(0,x)&=&0,&\\
q_n(0,x)&=&0,&\\
\SSS_n(0,x)&=&0,&  \text{in }\Omega,\\
\vv_n&=&0,&\text{on } \Sigma_\T,
\end{array}
\right.
\end{equation}
such that:
\begin{eqnarray*}
\mathfrak{F}_1&=&\FF(\ww_n,\pi_n,\psi_n)-\FF(\ww,\pi,\psi)-\alpha\Big[(\ww_n.\nabla)\ww_n-(\ww.\nabla)\ww\Big]-\nabla(\pi_n-\pi)+\textbf{\dv}( \psi_n-\psi),\label{defF1}\\
\mathcal{G}_1&=&-\ep^{-2}\dv(\ww_n-\ww),\label{defG1}\\
\mathcal{H}_1&=&2\omega\DD[\ww_n-\ww].
\end{eqnarray*}
First, multiply  the  equation  $(\ref{e9-3})_\one$  by $\vv_n$, 
and integrate over $\Omega$. We get:
\begin{eqnarray}\label{e10-1}
\alpha \Dt{}\n{\vv_n}^\two+(1-\omega)\Big(\n{\nabla\vv_n}^\two+\n{\dv \vv_n}^\two\Big)&\leq& \n{\pi_n-\pi}_\one^\two+\n{\psi_n-\psi}_\one^\two+4\n{\vv_n}^\two\nonumber\\
&&+\alpha^\two\n{(\ww_n.\nabla)\ww_n-(\ww.\nabla)\ww}^\two \nonumber\\
&&+\n{\FF(\ww_n,\pi_n,\psi_n)-\FF(\ww,\pi,\psi)}^\two.
\end{eqnarray}
We now estimate the term $\n{\FF(\ww_n,\pi_n,\psi_n)-\FF(\ww,\pi,\psi)}^\two$ on the right hand of   (\ref{e10-1}). Using the two inequalities :
\begin{eqnarray*}\label{e10-2}
\frac{\mathfrak{m}_\one}{2}\leq
\alpha+\ep^\two\pi(t,x)\leq 2\mathfrak{M}_\one \textup{ and }
\frac{\mathfrak{m}_\one}{2}\leq
\alpha+\ep^\two\pi_n(t,x)\leq 2\mathfrak{M}_\one,
\end{eqnarray*}
we obtain:
\begin{eqnarray*}\label{e10-3}
\n{\FF(\ww_n,\pi_n,\psi_n)-\FF(\ww,\pi,\psi)}^\two\leq C_\seven \mathfrak{m}_\one \ep^\four\Big[(1-\omega)^\two\n{\pi_n A\ww_n-\pi A\ww}^\two+\n{\pi_n\nabla \pi_n-\pi\nabla \pi}^\two\\+\n{w(\pi_n)\nabla \pi_n-w(\pi )\nabla \pi }^\two+\n{\pi_n \textbf{\dv}\SSS_n-\pi  \textbf{\dv}\SSS}^\two\Big].
\end{eqnarray*}
Then, (\ref{e10-1}) satisfies:
\begin{eqnarray}\label{e10-4}
\alpha \Dt{}\n{\vv_n}^\two+(1-\omega)\Big(\n{\nabla\vv_n}^\two+\n{\dv \vv_n}^\two\Big)&\leq& C_\eight\ell_n+ +4\n{\vv_n}^\two,
\end{eqnarray}
with
\begin{eqnarray*}\label{e10-5}
\ell_n&=&\n{\pi_n-\pi}_\one^\two+\n{\psi_n-\psi}_\one^\two +\alpha\n{(\ww_n.\nabla)\ww_n-(\ww.\nabla)\ww}^\two+(1-\omega)^\two\n{\pi_n A\ww_n-\pi A\ww}^\two\nonumber\\
&&+\n{\pi_n\nabla \pi_n-\pi\nabla \pi}^\two+\n{w(\pi_n)\nabla \pi_n-w(\pi )\nabla \pi }^\two
 +\n{\pi_n \textbf{\dv}\SSS_n-\pi  \textbf{\dv}\SSS}^\two.
\end{eqnarray*}
Second, multiply  the  equation  $(\ref{e9-3})_\two$  by $\ep^\two q_n$ and integrate over $\Omega$. This yields:
\begin{eqnarray}\label{e11-1}
\ep^\two\Dt{}\n{q_n}^\two&\leq&  (1+C_\nine\n{\sigma}_\two)\n{\ww_n-\ww}_\one^\two+(1+C_\ten  \n{\ww_n}_\three)\n{q_n}^\two\nonumber\\
&\leq&  (1+C_\nine\n{\sigma}_\two)\n{\ww_n-\ww}_\one^\two+\textit{j}_n\n{q_n}^\two,
\end{eqnarray}
with $j_n=1+C_\ten  \n{\ww_n}_\three$.\\
Finally, multiply  the  equation  $(\ref{e9-3})_\three$  by $\SSS_n/2\omega$, we obtain
\begin{eqnarray}\label{e12-1}
\frac{\we}{2\omega}\Dt{}\n{\SSS_n}^\two&\leq &\left(1+ \frac{\we C_\eleven}{2\omega}\n{\tau}_\two\right)\n{\ww_n-\ww}_\one^\two+ \left(1+\frac{1}{2\omega}+ \frac{\we  C_\twelve}{2\omega} \n{\ww_n}_\three \right)\n{\SSS_n}^\two\nonumber\\
&\leq &\left(1+ \frac{\we C_\eleven}{2\omega}\n{\tau}_\two\right)\n{\ww_n-\ww}_\one^\two+ \textit{k}_n\n{\SSS_n}^\two,
\end{eqnarray}
with $\textit{k}_n=1+\frac{1}{2\omega}+ \frac{\we  C_\twelve}{2\omega} \n{\ww_n}_\three$.

The functions $\textit{j}_n$  and $\textit{k}_n$, are positive and, because of the class
of solutions we consider,  $\textit{j}_n$  and $\textit{k}_n$  belong to $L^\one(0, T)$. Therefore, by using (\ref{e10-4}), (\ref{e11-1}), and (\ref{e12-1}),  we
deduce from Gronwall's lemma that:
\begin{eqnarray}
\n{\vv_n}^\two&\leq& \frac{C_\eight}{\alpha}\int_0^t \exp\left(\frac{-4s}{\alpha}\right)\ell_n(s) ds,\label{e11-2}\\
\n{q_n}^\two&\leq&\frac{1}{\ep^\two}\left(\1+C_\nine \mathfrak{B}_\one\right)\int_s^t \exp\left(\int_0^s \textit{j}_n(r) dr\right)\n{\ww_n(s)-\ww(s)}_\one^\two ds,\label{e11-3} \\
\n{\SSS_n}^\two&\leq&\left(\frac{2\omega}{\we}+C_\eleven \mathfrak{B}_\one\right)\int_s^t \exp\left(\int_0^s \textit{k}_n(r) dr\right)\n{\ww_n(s)-\ww(s)}_\one^\two ds.\label{e11-4}
\end{eqnarray}
The  sequence $\Big((\ww_n,\pi_n,\psi_n)\Big)_n$  of  $\mathfrak{R}_{\T}$   tends to $(\ww,\pi,\psi)$, and using (\ref{e11-2}), (\ref{e11-3}) and (\ref{e11-4}), we obtain  $\vv_n$, $q_n$ and $\SSS_n$ tend to zero in $\mathfrak{Y}_\T$. This meaning that the  sequence $\Big((\uu_n,\sigma_n,\tau_n)\Big)_n=\Big(\mathfrak{K}(\ww_n,\pi_n,\psi_n)\Big)_n$ tends to $ (\uu,\sigma,\tau)=\mathfrak{K}(\ww,\pi,\psi)$ and $ \mathfrak{K}$ is continuous in $\mathfrak{Y}_\T$.
\end{preuve}
\subsection{{Proof of Theorem }\ref{thm3}}
 We take, as usual, the difference of two solutions $(\uu_\one,\sigma_\one,\tau_\one)$ and $(\uu_\two,\sigma_\two,\tau_\two)$ belonging
to the class specified in the theorem \ref{thm3}. The vector function $\uu=\uu_\one-\uu_\two$, the scalar function $\sigma=\sigma_\one-\sigma_\two$ and the tensor function $\tau=\tau_\one-\tau_\two$ satisfy the following system:\begin{equation}\label{e13-1}
\left\{
\begin{array}{r c l l}
\alpha\Big[{\uu'}+
 (\uu .\nabla)\uu_\one+(\uu_\two.\nabla)\uu \Big]+(1-\omega)A\uu&=&
\mathfrak{F}_2 -\nabla\sigma+\textbf{\dv}\tau,\\
{\sigma'}+(\uu_\one.\nabla)\sigma+ (\uu.\nabla)\sigma_\two+
\sigma\dv\uu_\one+\sigma_\two\dv\uu&=&-\ep^{-2}\dv\uu,\\
\tau+\we\{\tau'+(\uu_\one.\nabla)\tau+\big(\uu.\nabla\big)\tau_\two+\mathbf g(\nabla\uu_\one,\tau )+\mathbf g\big(\nabla\uu,\tau_\two\big)\}
&=&2\omega\DD[\uu],
\end{array}
\right.
\end{equation}
with the boundary conditions:
\begin{equation}\label{e13-2}
\left\{
\begin{array}{r c l l}
  \uu(0,x)&=&0,&\\
\sigma(0,x)&=&0,&\\
\tau(0,x)&=&0,&  \text{in }\Omega,\\
\uu&=&0,&\text{on } \Sigma_\T,
\end{array}
\right.
\end{equation}
such that:
\begin{eqnarray*}
\mathfrak{F}_2&=&\FF(\uu_\one,\sigma_\one,\tau_\one)-\FF(\uu_\two,\sigma_\two,\tau_\two).\label{defF1}\\
\end{eqnarray*}
Multiply $(\ref{e13-1})_\one$, $(\ref{e13-1})_\two$ and $(\ref{e13-1})_\three$ by
$\uu$, $\ep^\two \sigma/\alpha$, and
 ${\tau}/({2 \omega})$, respectively,  and integrate over $\Omega$. Summing the three obtained  equations, one
 obtains
\begin{eqnarray}\label{estm13-3}
&&\frac{1}{2}\Dt{}\left(\alpha{\n{\uu}^\two}+\frac{\ep^\two}{\alpha\,}{\n{\sigma}^\two}+\frac{\we}{2
\omega}{\n{\tau}^\two}\right)+(1-\omega)\Bigl(\n{\nabla
\uu}^\two+\n{\dv \uu}^\two\Bigl)+\frac{1}{2
\omega}\n{\tau}^\two\hskip2cm\nonumber\\
&&\quad \leq \alpha \C_\twelve\Big[ \n{\uu_\one}\n{\uu}^\two+\n{\uu_\two}\n{\uu}^\two\Big]+
 \frac{\ep^\two}{\alpha} \C_\twelve\Big[\n{\sigma}\n{\uu_\one}_\three\n{\nabla\uu}+
 \n{\sigma_\two}_\two\n{\nabla\uu}\n{\uu}\\
&&\qquad+\n{\sigma}\n{\sigma_\one}_\two\n{\dv \uu}+\n{\sigma_\two}_\two\n{\sigma}\n{\dv\uu}+\n{\tau_\one}\n{\sigma}\n{\nabla\uu}+\n{\sigma_\two}\n{\tau}\n{\nabla\uu}\Big]\nonumber\\
 &&\qquad +\frac{\ep^\two }{ \alpha\,} \C_\twelve \Big[\n{\uu_\one}_\three\n{\sigma}^\two+\n{\nabla \uu}\n{\sigma_\two}_\two\n{\sigma}\Big] +\frac{\we }{2 \omega}\C_\twelve
 \Big[\n{\uu_\one}_\three\n{\tau}^\two+\n{\nabla \uu}\n{\tau_\two}_\two\n{\tau}\Big]. \nonumber
\end{eqnarray}
For $\delta> 0$, (\ref{estm13-3}) can be written as:
\begin{eqnarray}\label{estm13-4}
&&\frac{1}{2}\Dt{}\left(\alpha{\n{\uu}^\two}+\frac{\ep^\two}{\alpha\,}{\n{\sigma}^\two}+\frac{\we}{2
\omega}{\n{\tau}^\two}\right)+(1-\omega)\Bigl(\n{\nabla
\uu}^\two+\n{\dv \uu}^\two\Bigl)+\frac{1}{2
\omega}\n{\tau}^\two\hskip2cm\nonumber\\
&&\quad \leq \left[\C_\twelve\bigl( \n{\uu_\one} +\n{\uu_\two}\bigl)+\frac{(\C_\twelve)^\two}{2\delta}\n{\sigma_\two}^\two_\two\right]\alpha \n{\uu}^\two\nonumber\\
&&\qquad+\frac{\delta}{2}\left[\left(\frac{5\ep^\two}{\alpha}+\frac{\we}{2\omega}\right)\n{\nabla\uu}^\two+\frac{2\ep^\two}{\alpha}\n{\dv\uu}^\two\right]
\nonumber\\
&&\qquad +\left[\frac{(\C_\twelve)^\two}{2\delta}\Big(\n{\uu_\one}^\three_\two +\n{\sigma_\one}^\two_\two+2\n{\sigma_\two}^\two_\two+\n{\tau_\one}^\two_\two\Big)+\C_\twelve\n{\uu_\one}_\three\right]\frac{\ep^\two}{\alpha}
\n{\sigma}^\two\nonumber\\
&&\qquad +\left[\frac{(\C_\twelve)^\two}{2\delta}\n{\sigma_\two}^\two_\two+\C_\twelve\n{\uu_\one}_\three\right]\frac{\we}{2\omega}
\n{\tau}^\two.
\end{eqnarray}
From (\ref{estm13-4}), we then deduce that solutions $(\uu,\sigma,\tau)$ of (\ref{e13-1}) satisfy the following energy inequality:
\begin{eqnarray}\label{estm13-5}
&&\frac{1}{2}\Dt{}\left(\alpha{\n{\uu}^\two}+\frac{\ep^\two}{\alpha\,}{\n{\sigma}^\two}+\frac{\we}{2
\omega}{\n{\tau}^\two}\right)+(1-\omega)\Bigl(1- \frac{\delta(10\ep^\two\omega + \alpha{\we})}{4\alpha \omega(1-\omega)}\Bigl)\n{\nabla
\uu}^\two\nonumber\\
&&\qquad +(1-\omega)\Bigl(1- \frac{\delta\ep^\two}{\alpha(1-\omega)}\Bigl) \n{\dv \uu}^\two +\frac{1}{2
\omega}\n{\tau}^\two  \leq  {\mathcal{X} }_\delta\left[ \alpha \n{\uu}^\two+\frac{\ep^\two}{\alpha\,}{\n{\sigma}^\two}+\frac{\we}{2
\omega}{\n{\tau}^\two}\right],\hskip2cm
\end{eqnarray}
with
\begin{eqnarray}\label{estm13-6}
{\mathcal{X} }_\delta=\C_\twelve\bigl( \n{\uu_\one} +\n{\uu_\two}+\n{\uu_\one}_\three\bigl)+\frac{(\C_\twelve)^\two}{2\delta}\Big(\n{\uu_\one}^\three_\two +\n{\sigma_\one}^\two_\two+2\n{\sigma_\two}^\two_\two+\n{\tau_\one}^\two_\two\Big).
\end{eqnarray}
The function ${\mathcal{X} }_\delta$, defined in (\ref{estm13-6}), is positive. Moreover, because of the class
of solutions we consider, ${\mathcal{X} }_\delta$ belongs to $L^\one(0, T)$. Therefore, choosing $\delta > 0$ small enough, we
deduce from Gronwall's lemma that $\uu = 0$, $\sigma = 0$ and $\tau=0$ in $\Q_\T$, and that consequently $\uu_\one =\uu_\two$, $\sigma_\one = \sigma_\two$, $\tau_\one=\tau_\two$ in $\Q_\T$ and the system (\ref{e13-1}) has a unique solution.

\end{document}